\documentclass[11pt]{article}

\usepackage[utf8]{inputenc}
\usepackage[english]{babel}
\usepackage[margin=1in]{geometry}	
\usepackage{graphicx}
\usepackage{subcaption}
\usepackage{fancyhdr}
\usepackage{amsfonts,amssymb,amsthm,amsmath}
\usepackage{mathtools}
\usepackage{enumitem}
\usepackage[percent]{overpic}
\usepackage{tikz}
\usepackage{mathrsfs}
\usepackage{ifthen,tabularx,graphicx,multirow}
\usepackage{xargs}
\usepackage[colorinlistoftodos,prependcaption,textsize=tiny]{todonotes}
\usepackage{tcolorbox}
\usepackage{enumerate}
\usepackage{comment}
\usepackage{algorithm}
\usepackage{hyperref}
\usepackage{algpseudocode}
\usepackage{booktabs}
\usepackage{varwidth}
\usepackage[capitalise]{cleveref}
\usepackage{cite}

\DeclareMathOperator{\Var}{Var}

\definecolor{rev1}{HTML}{cb270f}
\definecolor{rev2}{HTML}{1c8235}

\makeatletter 
\newcounter{algorithmicH}
\let\oldalgorithmic\algorithmic
\renewcommand{\algorithmic}{
  \stepcounter{algorithmicH}
  \oldalgorithmic}
\renewcommand{\theHALG@line}{ALG@line.\thealgorithmicH.\arabic{ALG@line}}
\makeatother

\newcommand*{\spn}{{\mathrm{span}}}

\newcommand{\R}{\mathbb{R}}
\newcommand{\N}{\mathbb{N}}

\newcommand{\intd}{\mathrm{d}}

\newcommand{\Av}{\mathbf{A}}

\newcommand{\Fv}{\mathbf{F}}

\newcommand{\Uv}{\mathbf{U}}
\newcommand{\Vv}{\mathbf{V}}

\newcommand{\Xv}{\mathbf{X}}
\newcommand{\Yv}{\mathbf{Y}}

\newcommand{\Psiv}{\mathbf{\Psi}}
\newcommand{\Phiv}{\mathbf{\Phi}}

\newcommand{\gv}{\mathbf{g}}

\newcommand{\xv}{\mathbf{x}}
\newcommand{\yv}{\mathbf{y}}

\newcommand{\cP}{\mathcal{P}}
\newcommand{\cK}{\mathcal{K}}
\newcommand{\bG}{\mathbb{G}}

\newcommand{\bT}{\mathbb{T}}

\newcommand{\shortname}{PRONE}
\newcommand{\longname}{Petrov Regression of Nonlinear Evolution}

\newtheorem{theorem}{\bf Theorem}[section]

\newtheorem{corollary}{\bf Corollary}[section]
\newtheorem{proposition}{\bf Proposition}[section]
\newtheorem{lemma}{\bf Lemma}[section]
\newtheorem{example}{\bf Example}[section]
\newtheorem{remark}{\bf Remark}[section]

\begin{document}

\title{\shortname~: Petrov-Galerkin Operator Learning Unifies DMD, SINDy \& Koopmanism}

\author{
Matthew J. Colbrook\textsuperscript{1}\quad
April Herwig\textsuperscript{1}\thanks{Corresponding author: \href{mailto:ahlh2@cam.ac.uk}{ahlh2@cam.ac.uk}}\quad
J. Nathan Kutz\textsuperscript{2}\\[0.5em]
\small\textsuperscript{1}Department of Applied Mathematics and Theoretical Physics, University of Cambridge\\
\small Cambridge CB3 0WA, UK\\
\small\textsuperscript{2}Autodesk Research, London WC2N 4HN, UK
}

\date{}

\maketitle

\begin{abstract}
Data-driven dynamics often asks how to linearize a nonlinear system.  We ask
instead: which observables should be advanced, and where should their futures
live?  This leads to \longname{} (\shortname{}), a Petrov--Galerkin regression
framework based on
$
    \Psi(\Xv)K \approx \Phi(\Yv),
$
with distinct trial and test dictionaries.  In this form, DMD, EDMD, SINDy,
Koopman regression, sparse regression, and low-rank regression become variants
of one construction: different dictionaries, weights, and constraints.
We keep the linear algebra of Koopman learning, but drop
the artificial requirement that a finite model map a dictionary into itself.
With this asymmetry, eigenmodes are no longer the right objects. Instead, we use singular modes, which identify the observable combinations captured by the data,
their projected futures, and the strength of the coupling between the two
spaces. We identify the limiting projected operator and prove \(L^2\) convergence of
the resulting nonlinear predictor.  We give examples from chaotic maps, the double
gyre, a pitching-airfoil wake, and Lorenz--63, where \shortname{} outperforms DeepONets, Fourier neural operators, and reservoir computers with considerably fewer parameters. These examples show the same message: lift once, regress once, and
let the singular structure reveal statistics, transport, prediction, and
dimension.
\end{abstract}

\noindent\textbf{MSC 2020:} 65P99, 37M10, 46N40

\medskip
\noindent\textbf{Keywords:} Koopman operator, Perron--Frobenius operator, singular modes, sparse regression, nonlinear dynamical systems

\medskip

\section{Introduction}\label{sec:introduction}

We consider a discrete-time dynamical system
\[
        \xv^{k+1}=F(\xv^k), \qquad F:\Omega\to\Omega\subset\R^d.
\]
Our aim is to study its evolution from trajectory data. The obstacle is that \(F\) is typically nonlinear: a pendulum, a chemical reaction, a chaotic attractor, or a fluid wake may obey a simple law, but not one that is linear in the measured variables.

There is an old and powerful way around this difficulty.  Instead of looking
only at the state $\xv$, look at functions of the state.  These functions are
called observables.  The temperature at a point, the energy of an oscillator, a
Fourier coefficient, a polynomial feature, or a sensor reading are all
observables.  A nonlinear map on states induces a linear map on observables:
$
        \phi \mapsto \phi\circ F .
$
This is the Koopman point of view \cite{Koopman_1931}.  The price is that the
linear operator is infinite-dimensional.  The art of data-driven Koopman
computation is to choose a useful finite part of it.

Most numerical methods make this finite part square.  One chooses a dictionary
of observables, advances the data, projects back to the same dictionary, and
obtains a matrix.  Dynamic Mode Decomposition (DMD), Extended Dynamic Mode
Decomposition (EDMD), kernel variants, and many related algorithms fit naturally
into this pattern \cite{SCHMID_2010,Tu_2014,edmd,williams2015,Korda_2018}.
The resulting matrix is then often analysed through its eigenvalues and
eigenvectors.

This paper begins by changing one line of this story.  The observables whose
future we want to predict need not be the observables in which we represent that
future. That is, the right finite section of Koopman is not necessarily square.

Suppose we have snapshot pairs
\[
        \Xv =
        \begin{pmatrix}
        \xv_1^\top \\ \vdots \\ \xv_M^\top
        \end{pmatrix},
        \qquad
        \Yv =
        \begin{pmatrix}
        F(\xv_1)^\top \\ \vdots \\ F(\xv_M)^\top
        \end{pmatrix}.
\]
Choose two finite families of observables:
\[
        \Phi=\{\phi_j\}_{j=1}^{N_2},
        \qquad
        \Psi=\{\psi_i\}_{i=1}^{N_1}.
\]
The family $\Phi$ contains the quantities whose values at the next time we want
to learn.  The family $\Psi$ contains the quantities of the present state from
which we shall learn them.  The computation is simply
\begin{equation}
        \Psi(\Xv)K \approx \Phi(\Yv).
        \label{eq:intro-regression}
\end{equation}
Thus each column of $K$ is an ordinary least-squares fit.  It approximates one
future observable $\phi_j\circ F$ by a linear combination of the present
observables in $\Psi$. This is the basic object of the paper. We call the resulting framework \longname{} (\shortname{}). The construction is sketched in \Cref{fig:schematic}: lift through two
dictionaries, regress once, and read the input--output structure through the
singular modes.

Several familiar methods appear at once.  If $\Psi=\Phi=\mathrm{id}$, then
\eqref{eq:intro-regression} is DMD \cite{SCHMID_2010,Tu_2014}.  If
$\Psi=\Phi$ is a nonlinear dictionary, it is EDMD \cite{edmd,williams2015}.
If $\Phi=\mathrm{id}$ and $\Psi$ is a nonlinear library, then
\[
        K^\top \Psi(\xv)
\]
is a nonlinear approximation of $F(\xv)$ obtained by a linear regression, the
unregularized least-squares core of SINDy \cite{sindy}.  Sparse, kernel,
structure-preserving, and verified Koopman variants arise by changing the
dictionaries, the weights, the regularization, or the constraints
\cite{Brunton_2022_ModernKoopman,Colbrook_Ayton_Szoke_2023,
Colbrook_mpEDMD_2023,Colbrook_RiggedDMD_2025}.

The rectangular viewpoint also changes the modal analysis.  Eigenvectors are
the right objects for a square map from a space to itself.  But
\eqref{eq:intro-regression} maps one finite observable space into another.  In
general there need be no eigenvectors to compute.  The natural objects are
singular vectors.  Right singular modes are combinations of the observables in
$\Phi$ whose futures are most strongly captured by the chosen trial space.
Left singular modes are the corresponding projected futures in the span of
$\Psi$.  The singular values measure the strength of these pairings.

The distinction is more than cosmetic.  Eigenanalysis asks for coordinates that
reproduce themselves up to multiplication.  Singular analysis asks which
observable combinations are seen, retained, amplified, or lost by the dynamics
and by the chosen dictionary.  This is an input-output question, and it remains
meaningful when the finite Koopman section is rectangular, rank-deficient, or
deliberately asymmetric.

\begin{figure}
    \centering
    \includegraphics[width=0.9\linewidth]{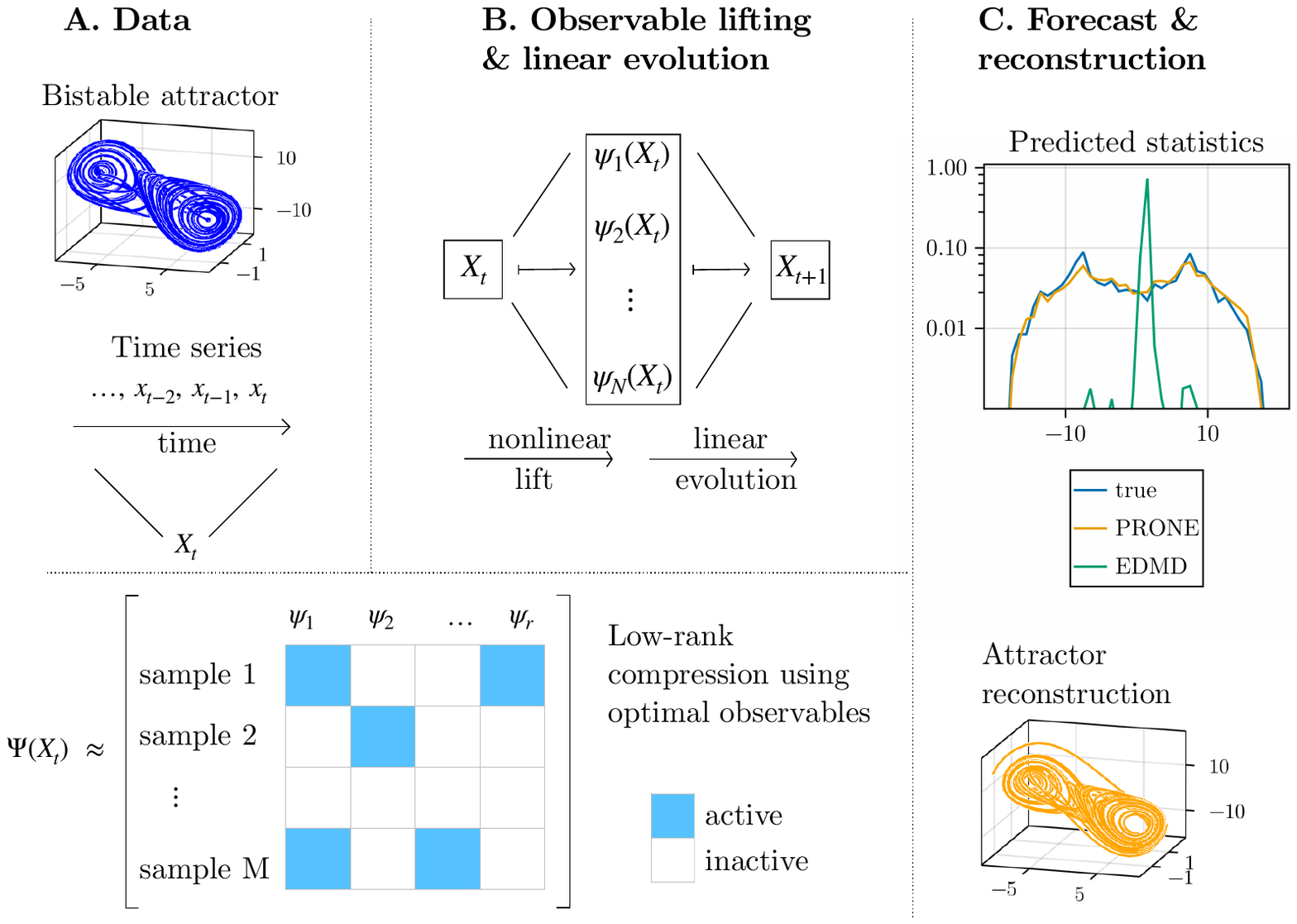}
    \caption{
    Schematic construction of \shortname{}.  Snapshot pairs are lifted through
    two possibly different observable dictionaries.  Least squares gives a
    finite Petrov--Galerkin map between the observable spaces.  When this map is
    rectangular, singular modes, rather than eigenmodes, give the natural
    input-output dynamical structures.
    }
    \label{fig:schematic}
\end{figure}

The contributions of the paper are as follows.

\begin{itemize}
\item
We formulate DMD, EDMD, SINDy, and Koopman-based regression as instances of one
Petrov--Galerkin least-squares problem between observable spaces.  The square
case is only one possibility; the rectangular case is the main freedom.

\item
We show how \shortname{} gives nonlinear predictors from linear maps in lifted
coordinates.  In the important case $\Phi=\mathrm{id}$, the learned map
$K^\top\Psi(\xv)$ is a nonlinear approximation of the dynamics $F(\xv)$.

\item
We identify the limiting operator.  Although the notation is Koopman-theoretic,
the learned predictor also exposes the adjoint, transfer-operator side of the
story.  Under quadrature and density assumptions, the finite-data regressions
converge to the corresponding projected operator, and the resulting predictors
converge in $L^2$.

\item
We develop a singular-mode analysis for rectangular Petrov--Galerkin sections.
The right modes identify observables whose evolution is captured by the chosen
space; the left modes identify their projected futures; the singular values
quantify the strength of these pairings.

\item
We demonstrate the framework on maps, flows, chaotic attractors, and fluid data.
The examples show that the same
regression captures the prediction horizon and invariant statistics of a
chaotic map, coherent transport in the double gyre, the statistical geometry of
Lorenz--63, and an effective rank for a high-dimensional airfoil wake. Moreover the time-series prediction outperforms popular operator methods, while using two orders of magnitude less parameters. 
\end{itemize}

The operator-theoretic background is classical.  Koopman observed that a
nonlinear dynamical system induces a linear operator on observables
\cite{Koopman_1931}; modern Koopman theory and DMD turned this observation into
a computational program for extracting modes, spectra, and coherent coordinates
from data \cite{Mezic_2005,Rowley_2009,SCHMID_2010,Kutz_2016_DMD}.  EDMD
expanded the dictionary of observables \cite{edmd}, kernel methods made the
feature space implicit \cite{williams2015}, and SINDy emphasized sparse
regression on nonlinear libraries \cite{sindy,Brunton_2016_SINDYc}.  Recent
developments have clarified the roles of residuals, spectral pollution,
structure preservation, and approximation limits in Koopman computations
\cite{Colbrook_Townsend_2024,Colbrook_2024_Limits,
convergentmethodskoopmanoperators,Conradie_2026_Trustworthy,
Gray_2026_DeepMDMD}.

Our message is complementary to this literature.  We do not propose another
search for Koopman eigenfunctions.  We regard the finite section itself as the
modelling object.  Once the two observable spaces are allowed to differ, the
square Koopman matrix becomes one case of a larger Petrov--Galerkin
construction.  \shortname{} is the forecasting face of this construction; the
singular-mode analysis is its interpretive face.

The rest of the paper develops this program.  The next section introduces the
Petrov--Galerkin regression, its special cases, regularization, and convergence.
\Cref{sec:singular} explains the singular modes and their interpretation.
\Cref{sec:examples} applies the method to representative nonlinear systems,
including coherent transport in the double gyre, singular coordinates on the
Lorenz attractor, and reduced modelling of a pitching-airfoil wake.

\section{Petrov--Galerkin Learning of Dynamical Systems}

We now pass from dynamics to matrices. From snapshot pairs we form two lifted data matrices, using dictionaries that may differ, and compute the best linear map between them in least squares. This map is a Petrov--Galerkin section of the evolution between observable spaces; it need not be square. We then identify the limiting operator, prove convergence of the associated predictor, and discuss the two regularizations most important in computation: sparsity and low rank.

\subsection{A Linear Framework for Nonlinear Problems}

Wishing to perform only a simple linear-regression-based learning method but still having the desire to capture nonlinear effects, we nonlinearly embed $\Xv$ and $\Yv$ into observable spaces:
\begin{equation*}
	\Psi (\Xv) = \begin{pmatrix}
		\psi_1 (\xv_1) & \ldots & \psi_{N_1} (\xv_1) \\ 
		\vdots & \ddots & \vdots \\ 
		\psi_1 (\xv_M) & \ldots & \psi_{N_1} (\xv_M)
		\end{pmatrix} \in \R^{M \times N_1} 
\end{equation*}
and 
\begin{equation*}
	\Phi (\Yv) = \begin{pmatrix}
		\phi_1 (F(\xv_1)) & \ldots & \phi_{N_2} (F(\xv_1)) \\ 
		\vdots & \ddots & \vdots \\ 
		\phi_1 (F(\xv_M)) & \ldots & \phi_{N_2} (F(\xv_M))
		\end{pmatrix} \in \R^{M \times N_2} . 
\end{equation*}
The functions $\psi_i,\phi_j \in L^2(\Omega;\R)$ are real-valued observables, and the collections $\Psi$ and $\Phi$ are dictionaries. The inner product is the real Lebesgue $L^2$ inner product $\langle f,g\rangle=\int_\Omega f g\,\mathrm{d}m$. In some cases we may use a different measure $\rho$; these cases will be clearly distincted as $L^2(\Omega ; \R ; \rho)$. 
The Petrov--Galerkin freedom is that these dictionaries need not coincide: in general $N_1 \neq N_2$. After lifting the data, the nonlinear learning problem becomes the linear regression
\begin{equation}\label{eq:petrov}
    \min_{K \in \R^{N_1 \times N_2}}
    \big\| \sqrt{W} \Psi(\Xv) K - \sqrt{W} \Phi(\Yv) \big\|_F^2 
\end{equation}
where $W \in \R^{M \times M}$ is a positive diagonal weight matrix. This problem is solved by taking 
\begin{equation}\label{eq:petrov_answer}
	K = \Psi(\Xv)^\dagger \Phi(\Yv) = \left(\Psi(\Xv)^\top \,W\, \Psi(\Xv) \right)^{-1} \Psi(\Xv)^\top \,W\, \Phi (\Yv) 
\end{equation}
where $\cdot^\dagger$ denotes the Moore-Penrose pseudoinverse. 

Here is the operator behind the regression. For an observable \(g\), the Koopman operator acts as
\begin{equation*}
    \cK : L^2(\Omega;\R) \to L^2(\Omega;\R), \quad g \mapsto \cK g = g \circ F.
\end{equation*}
On a suitable \(L^2\) space this is a linear operator; boundedness requires the usual assumptions on \(F\) and the reference measure \cite{Lasota2013-yq}. The regression \eqref{eq:petrov} is a Petrov--Galerkin approximation of this operator. For instance, suppose that the samples \(\xv_m\) are drawn independently from the reference measure \(m\), and take \(W=M^{-1}I\). Then \cref{eq:petrov} is the Monte Carlo discretization of
\begin{equation}\label{eq:error}
    \min_{K \in \R^{N_1 \times N_2}}
    \sum_{i = 1}^{N_2} \int \Big| \big(\sum_{j=1}^{N_1} K_{j i} \,\psi_j (\xv)\, \big) - [\cK \phi_i] (\xv) \Big|^2 \intd \xv . 
\end{equation}
Thus each Koopman-advanced observable $\cK \phi_i$ is approximated in the trial space $\spn\{\psi_1,\ldots,\psi_{N_1}\}$, and the regression solves all $N_2$ approximation problems simultaneously.

The choice of dictionaries makes the construction flexible.

\begin{example}
	Let $\Psi = \Phi = id$ be the identity on a bounded subset $\Omega \subset \R^d$ (so that $id \in L^2(\Omega;\R)$). Then \cref{eq:petrov_answer} reduces to $K = \Xv^\dagger \Yv$, the matrix yielded by Dynamic Mode Decomposition (DMD) \cite{SCHMID_2010}. 
\end{example}
\begin{example}
	Alternatively, consider that $\Psi$ is again some arbitrary dictionary but $\Phi$ is still the identity. One then recovers (an unregularized version of) Sparse Identification of Nonlinear Dynamics (SINDy) \cite{sindy}. We mention in \cref{sec:regularize} some forms of regularization of \cref{eq:petrov} which encompass SINDy as well. 
\end{example}
\begin{example}
	Further still, if $\Psi = \Phi$ is an arbitrary dictionary then one recovers the Extended Dynamic Mode Decomposition (EDMD) \cite{edmd}. In this context the Petrov--Galerkin method reduces to a Riesz--Galerkin one. The rectangular viewpoint is more revealing: even when the dictionaries contain the same type of observables but have different sizes, for instance $\Phi \subset \Psi$, the singular structure exposes information that is invisible in the square formulation.
\end{example}

The case \(\Phi=\mathrm{id}\) is especially important, for then the regression learns the dynamics themselves. If \(\phi_i(\xv)=x_i\), then $\cK\phi_i(\xv)=\phi_i(F(\xv))=[F(\xv)]i$, and \cref{eq:error} becomes
\begin{equation*}
	\min_{K \in \R^{N_1 \times d}}
    \int \Big\| \big(\sum_{j=1}^{N_1} K_j \,\psi_j (\xv)\, \big) - F(\xv) \Big\|_{\R^d}^2 \intd \xv . 
\end{equation*}
where $K_j$ is the vector-valued $j$th row of $K$. Thus $K^\top$ maps lifted observables back to state space, giving the optimal linear reconstruction of $F$ from the nonlinear embedding $\Psi:\Omega \to \R^{N_1}$. This is the essential architecture of \longname{} (\shortname): lift nonlinearly, regress linearly, and use rank reduction to retain the dynamically relevant directions; see \cref{alg:SINDy}. In contrast with a neural network, the nonlinear features are fixed in advance rather than learned through alternating affine maps and nonlinear activations.

\begin{algorithm}[t]
\textbf{Input:} Snapshot data $\Xv,\Yv \in \R^{M \times d}$ with $\Yv=F(\Xv)$, dictionary $\Psi$, rank $r \in \N$ \\
\vspace{-4mm}
\begin{algorithmic}[1]
\State Form the lifted data matrix $\Psi(\Xv) \in \R^{M \times N}$.
\State Compute a truncated SVD
$\Psi(\Xv)\approx U\Sigma V^\top$, with
$U \in \R^{M \times r}$, $\Sigma \in \R^{r \times r}$, $V \in \R^{N \times r}$.
\State Compute the compressed regressor
$K = V\Sigma^\dagger U^\top\Yv$.
\end{algorithmic}
\textbf{Output:} The learned map $F_{N,M}(\xv)=K^\top\Psi(\xv)$.
\caption{\longname{} (\shortname)}
\label{alg:SINDy}
\end{algorithm}

\subsection{Extracting Dynamically Relevant Modes}

First consider the EDMD case $\Psi=\Phi$, where the regression produces a square finite section of the Koopman operator $\cK$. In this setting spectral language is natural: Koopman eigenfunctions organize the asymptotic growth, decay, and oscillation of observables. If
\begin{equation*}
	g = \sum_{j = 1}^{N} \gv_j \varrho_j, \quad
	\cK \varrho_j = \lambda_j \varrho_j ,\quad
\text{then}\quad
	g(x^1) = g(F(x^0)) = \sum_{j=1}^N \lambda_j \gv_j \varrho_j(x^0) .
\end{equation*}
The eigenvalues record growth, decay, and oscillation. For nonnormal finite sections, however, eigenvalues alone are not enough; pseudospectra control finite-time growth and transient amplification \cite[page 25]{Trefethen1999-tw}. This operator-theoretic picture motivates EDMD: eigenfunctions, and approximate eigenfunctions, identify observables that remain coherent under the dynamics. Thus an observable \(g\) is coherent at scale \(\epsilon\) if
$
\|g\circ F-\lambda g\|=\mathcal{O}(\epsilon),
$
with \(\lambda\) describing the leading growth, decay, or oscillation over time intervals before the accumulated defect dominates.

The top spectral information is the most useful in practice. Eigenvalues and pseudoeigenvalues of largest modulus, together with their associated modes, give a coarse description of the dynamics through observables that decay slowly, oscillate persistently, or exhibit finite-time amplification. This interpretation must be used with care: a finite EDMD matrix $K$ need not inherit the spectrum of the Koopman operator $\cK$, and this possible mismatch motivates residual and verified variants such as ResDMD \cite{Colbrook_Ayton_Szoke_2023}. Even so, the eigenvectors $v^j$ of $K$ and their associated \emph{eigenmodes} $\sum_\ell v^j_\ell \psi_\ell$ remain useful diagnostic objects in computations; see \cref{sec:examples}.

When $\Psi \neq \Phi$, this eigenmode picture breaks. The learned map $K$ is now rectangular, or at least acts between distinct observable spaces, so eigenvectors are no longer the right objects. The corresponding structure is singular: an input observable in $\spn\Phi$ is paired with its best-resolved output in $\spn\Psi$. Given a truncated singular value decomposition
\begin{equation*}
	K = S \Lambda^{1/2} Z^\top, \quad
	S \in \R^{N_1 \times r}, \quad
	\Lambda^{1/2} \in \R^{r \times r}, \quad
	Z \in \R^{N_2 \times r}, \quad
	r \leq \min\{N_1,N_2\},
\end{equation*}
we define the \emph{singular modes}
\begin{equation*}
	\varphi_k = \sum_{\ell = 1}^{N_1} S_{\ell k} \psi_\ell \quad \text{and} \quad
	\zeta_k = \sum_{\iota = 1}^{N_2} Z_{\iota k} \phi_\iota, \quad
	1 \leq k \leq r .
\end{equation*}
They are evaluated on data by $\Psi(\Xv)S$ and $\Phi(\Xv)Z$, respectively. Their interpretation is developed in \cref{sec:singular}, and their computation is summarized in \cref{alg:one_side}.

\begin{algorithm}[t]
\textbf{Input:} Snapshot pairs $\Xv,\Yv \in \R^{M \times d}$ with $\Yv=F(\Xv)$, positive diagonal weights $W \in \R^{M \times M}$, dictionaries $\Psi,\Phi$, target rank $r \in \N$ \\
\vspace{-4mm}
\begin{algorithmic}[1]
\State Compute the data matrices $\ \Psi(\Xv) \in \R^{M \times N_1}$, $\ \Phi(\Xv) \in \R^{M \times N_2}$
\Require $N_1 \gg N_2$; the opposite regime is analogous.
\State Compute a truncated SVD $\ W^{1/2} \Psiv(\Xv)\approx U \Sigma V^\top,$ $\ U \in \mathbb{R}^{M \times r}$, $\ \Sigma \in \mathbb{R}^{r \times r}$, $\ V \in \mathbb{R}^{N_1 \times r}$
\State Compute the compression $H = U^\top W^{1/2} \Phi(\Yv) \in \R^{r \times N_2}$
\State Compute an eigendecomposition $(H H^\top) L = L \Lambda$ and set $Z = H^\top L \Lambda^{-1/2}$
\end{algorithmic} \textbf{Output:} Singular values $\ \Lambda^{1/2}$ and modes evaluated at the snapshots, $\ W^{-1/2} U L$, $\ \Phi (\Xv) Z$
\caption{Singular Mode Decomposition for Petrov-Galerkin Regression $\Pi_\Psi \cK \cP_\Phi^*$}
\label{alg:one_side}
\end{algorithm}

\subsection{Convergence}

We now connect the regression matrix $K$ to its limiting operator and prove convergence of the resulting nonlinear predictor in the appropriate topology. Define the analysis and synthesis maps
\begin{equation*}
	\begin{split}
		\cP_\Psi : L^2(\Omega;\R) \to \R^{N_1},& \quad\quad 
		\cP_\Psi g = \begin{pmatrix}
			\langle \psi_1, g \rangle \\ 
			\langle \psi_2, g \rangle \\ 
			\vdots \\ 
			\langle \psi_{N_1}, g \rangle 
		\end{pmatrix}, \\  
		\cP_\Psi^* : \R^{N_1} \to L^2(\Omega;\R),& \quad\quad 
		\cP_\Psi^* c = \sum_{\ell = 1}^{N_1} c_\ell \psi_\ell
	\end{split}
\end{equation*}
with analogous definitions for $\Phi$. If $\Psi$ is orthonormal, then $\cP_\Psi^*\cP_\Psi$ is the orthogonal projection onto $\spn\{\psi_1,\ldots,\psi_{N_1}\}$. For a general linearly independent dictionary, the same projection is
\begin{equation}\label{eq:orth_projector}
		\Pi_\Psi = \cP_\Psi^*\, (\cP_\Psi \cP_\Psi^*)^{-1}\, \cP_\Psi : L^2(\Omega;\R) \to L^2(\Omega;\R) 
\end{equation}
(and analogously for $\Phi$). 
Since \cref{eq:error} is approximated from data by quadrature, the snapshots should sample the reference measure through a convergent quadrature rule: Monte Carlo in high dimension, and higher-order rules such as Gauss--Legendre when the dimension permits. We also assume that the trial data matrix $\Psi(\Xv) \in \R^{M \times N_1}$ has full column rank for all sufficiently large $M$; with positive diagonal weights this is equivalent to full column rank of $W^{1/2}\Psi(\Xv)$. This is the finite-data counterpart of $L^2$-linear independence of the dictionary functions $\psi_j$, and in computation it is enforced by an SVD, with unresolved or nearly dependent directions discarded.
\begin{proposition}\label{prop:petrov}
		Let $\Psi$ and $\Phi$ be dictionaries whose elements are linearly independent in $L^2(\Omega;\R)$. Suppose that the snapshots $\{\xv_m\}_{m=1}^M$ and weights $W$ define a convergent quadrature rule for the relevant inner products. Let $K$ be the least-squares solution of \cref{eq:petrov}, and set
	\begin{equation}\label{eq:inner}
			G_\Psi = \Psi(\Xv)^\top W \Psi(\Xv), \quad 
			G_\Phi = \Phi(\Xv)^\top W \Phi(\Xv), \quad
			A = \Psi(\Xv)^\top W \Phi(\Yv) . 
	\end{equation}
	Then 
	\begin{align}\label{eq:one_side}
		\lim_{M \to \infty} \cP_\Psi^* K 
		&= \Pi_\Psi \cK \cP_\Phi^* ,\\
		\label{eq:both_sides}
		\lim_{M \to \infty} \cP_\Psi^* G_\Psi^{-1} A G_\Phi^{-1} \cP_\Phi 
		&= \Pi_\Psi \cK \Pi_\Phi . 
	\end{align}
\end{proposition}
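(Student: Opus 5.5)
The plan is to reduce both limits to entrywise convergence of the three finite matrices in \cref{eq:inner}, and then use continuity of matrix inversion. By the quadrature assumption,
\[
(G_\Psi)_{ij}=\sum_m W_{mm}\psi_i(\xv_m)\psi_j(\xv_m)\to\langle\psi_i,\psi_j\rangle .
\]
Since the entries of \(\Phi(\Yv)\) are \(\phi_j(F(\xv_m))=[\cK\phi_j](\xv_m)\), we likewise get
\[
A_{ij}\to\langle\psi_i,\cK\phi_j\rangle=(\cP_\Psi\cK\cP_\Phi^*)_{ij}.
\]
In compact form this reads \(G_\Psi\to\cP_\Psi\cP_\Psi^*\), \(G_\Phi\to\cP_\Phi\cP_\Phi^*\) and \(A\to\cP_\Psi\cK\cP_\Phi^*\). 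Here ``the relevant inner products'' is read as exactly these pairings \(\psi_i\psi_j\), \(\phi_i\phi_j\) and \(\psi_i\,(\phi_j\circ F)\). Note that \(G_\Phi\) is built from \(\Phi(\Xv)\), not \(\Phi(\Yv)\), so it is a quadrature for the \(\Phi\)-Gram matrix at the same nodes.

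Next I would use linear independence of the dictionaries in \(L^2\): the limiting Gram matrices \(\cP_\Psi\cP_\Psi^*\) and \(\cP_\Phi\cP_\Phi^*\) are symmetric positive definite. Because inversion is continuous on invertible matrices, \(G_\Psi\) and \(G_\Phi\) are invertible for all large \(M\). This is consistent with the standing full-column-rank assumption. It also gives \(G_\Psi^{-1}\to(\cP_\Psi\cP_\Psi^*)^{-1}\) and \(G_\Phi^{-1}\to(\cP_\Phi\cP_\Phi^*)^{-1}\).

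From \cref{eq:petrov_answer} we have \(K=G_\Psi^{-1}A\), so
\[
K\to(\cP_\Psi\cP_\Psi^*)^{-1}\cP_\Psi\cK\cP_\Phi^* .
\]
The map \(\cP_\Psi^*:\R^{N_1}\to L^2\) is a fixed bounded finite-rank operator, so composing with it preserves convergence in operator norm. Using \cref{eq:orth_projector}, this gives
\[
\cP_\Psi^*K\to\cP_\Psi^*(\cP_\Psi\cP_\Psi^*)^{-1}\cP_\Psi\cK\cP_\Phi^*=\Pi_\Psi\cK\cP_\Phi^*,
\]
which is \cref{eq:one_side}. For \cref{eq:both_sides} I would multiply on the right by \(G_\Phi^{-1}\cP_\Phi\), where \(\cP_\Phi\) is a fixed bounded map \(L^2\to\R^{N_2}\). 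This gives
\[
\Pi_\Psi\cK\cP_\Phi^*(\cP_\Phi\cP_\Phi^*)^{-1}\cP_\Phi=\Pi_\Psi\cK\Pi_\Phi .
\]
All limits hold in operator norm, because the matrices live in fixed finite dimensions and are sandwiched between fixed bounded operators.

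The main obstacle is the first step, namely justifying \(A\to\cP_\Psi\cK\cP_\Phi^*\). It needs \(\cK\phi_j=\phi_j\circ F\) to lie in \(L^2\), which requires \(\cK\) bounded, and the quadrature to converge for the integrand \(\psi_i\cdot(\phi_j\circ F)\). I would handle this by stating it as part of the hypothesis ``convergent quadrature for the relevant inner products.'' For Monte Carlo sampling with \(W=M^{-1}I\), I would invoke the strong law of large numbers: the integrand is in \(L^1\) by Cauchy--Schwarz, so convergence holds almost surely. For deterministic rules, I would assume enough regularity (for instance continuity) of \(\psi_i\), \(\phi_j\) and \(F\).
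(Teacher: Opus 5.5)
Your proposal is correct and follows essentially the same route as the paper's proof. In both, quadrature gives entrywise convergence of $G_\Psi$, $G_\Phi$ and $A$ to $\cP_\Psi\cP_\Psi^*$, $\cP_\Phi\cP_\Phi^*$ and $\cP_\Psi\cK\cP_\Phi^*$; inversion is continuous at the nonsingular limiting Gram matrices; and \cref{eq:orth_projector} identifies the limits, while your remarks on operator-norm convergence and on the integrability of $\psi_i\,(\phi_j\circ F)$ just make explicit what the paper folds into the ``convergent quadrature'' hypothesis.
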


\begin{corollary}\label{cor:learned_map}
		Assume the conditions of \cref{prop:petrov} are satisfied. Let $\Psi = \Psi_N$ be a sequence of dictionaries which span a dense subset of $L^2(\Omega;\R)$ in the limit $N \to \infty$.  Let $\Phi=id$ be the coordinate dictionary. The corresponding \shortname{} predictor is
  $
  		F_{N,M}(\xv)=K^\top\Psi_N(\xv).
  $
	Then 
	$
		\lim_{N \to \infty} \lim_{M \to \infty} \big\| F - F_{N,M} \big\|_{L^2(\Omega;\R^d)} = 0 . 
	$
\end{corollary}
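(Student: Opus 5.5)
The plan is to apply \cref{prop:petrov} componentwise with $\Phi=\mathrm{id}$, and then pass $N\to\infty$ using density. Write $\phi_i(\xv)=x_i$ for $i=1,\dots,d$; these lie in $L^2(\Omega;\R)$ because $\Omega$ is bounded (the same standing assumption as in the DMD example). The predictor's $i$th component is
\[
[F_{N,M}]_i=\sum_{j=1}^{N}K_{ji}\psi_j=\cP_{\Psi_N}^*Ke_i .
\]
By \eqref{eq:one_side}, for fixed $N$ and $M\to\infty$,
\[
\cP_{\Psi_N}^*Ke_i\to\Pi_{\Psi_N}\cK\cP_\Phi^*e_i=\Pi_{\Psi_N}\cK\phi_i=\Pi_{\Psi_N}F_i .
\]
The limit is in $L^2$: the map $\cP_{\Psi_N}^*$ is a fixed finite-dimensional synthesis, so convergence of the coefficient matrix $K\to G^{-1}\cP_{\Psi_N}\cK\cP_\Phi^*$ gives $L^2$ convergence of the functions. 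Here $G$ is the continuous Gram matrix, which is invertible by linear independence. The quadrature convergence gives $G_\Psi\to G$ and $A\to\cP_{\Psi_N}\cK\cP_\Phi^*$ entrywise, and inversion is continuous near $G$.

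Hence, for each fixed $N$,
\[
\lim_{M\to\infty}\|F-F_{N,M}\|_{L^2(\Omega;\R^d)}^2=\sum_{i=1}^d\|(I-\Pi_{\Psi_N})F_i\|_{L^2}^2 .
\]
Summing $d$ components causes no difficulty, since $d$ is fixed. It remains to show that $\|(I-\Pi_{\Psi_N})F_i\|\to0$. I need $F_i=\cK\phi_i\in L^2$; this holds because $F$ maps into the bounded set $\Omega$, so each $F_i$ is bounded on a set of finite measure (or one may simply invoke boundedness of $\cK$). Since $\Pi_{\Psi_N}$ is the orthogonal projection onto $V_N=\spn\Psi_N$, the quantity $\|(I-\Pi_{\Psi_N})F_i\|$ equals $\mathrm{dist}(F_i,V_N)$.

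The main obstacle is interpreting ``span a dense subset in the limit''. I will read it as: for every $g\in L^2$, $\mathrm{dist}(g,V_N)\to0$. If the dictionaries are nested, $V_N\subset V_{N+1}$ with $\overline{\bigcup_N V_N}=L^2$, this follows directly. Given $\varepsilon>0$, pick $h\in V_{N_0}$ with $\|F_i-h\|<\varepsilon$. Then for every $N\ge N_0$, $\mathrm{dist}(F_i,V_N)\le\|F_i-h\|<\varepsilon$ by best-approximation monotonicity. Without nesting, this distance condition is exactly what the hypothesis must mean, and I will state that explicitly. The order of limits matters: the inner $M$-limit must be taken first for each fixed $N$, which is exactly the order in the statement. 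So no uniformity in $N$ of the quadrature convergence is needed.
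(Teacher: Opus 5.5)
Your proof is correct and follows the paper's argument. Both show that, for fixed $N$, the $M\to\infty$ limit of the $j$th component of $F_{N,M}$ is $\Pi_{\Psi_N}F_j$, and both then conclude from the density of the spans. The only difference is how that limit is obtained: you read it off directly from \eqref{eq:one_side} applied to each coordinate vector, whereas the paper re-derives it by orthonormalizing $\Psi$ and expanding through the adjoint $\cK^*$. Your version is shorter, and it is also more explicit about the $L^2$ sense of the inner limit, the meaning of density, and the order of limits.
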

\begin{remark}
	This result clarifies what data-based predictors such as \shortname{}, SINDy \cite{sindy}, Exact DMD \cite{Tu_2014}, and kernel EDMD \cite{williams2015} actually learn. Although the construction can be written in Koopman language, the fitted predictor is tied to the adjoint action: it represents the transfer operator, adjoint to Koopman, projected through the chosen trial and test dictionaries.
\end{remark}
	\begin{example}[Pointwise convergence in \cref{cor:learned_map} is not guaranteed]
	  	Let $\Omega=[0,1)$ be the one-dimensional torus and let
	  	$
	  		\Psi_N(x)=\left\{1,\cos(2\pi x),\sin(2\pi x),\ldots,
	  		\cos(2\pi N x),\sin(2\pi N x)\right\}
	  	$
	  	be the real Fourier dictionary. The theorem of DuBois--Reymond implies that there exists a bounded continuous function $F:
  	\Omega\to\Omega$ such that \cite[Prop. 3.3.5]{Grafakos2014-en}
  	$
  		\limsup_{N \to \infty} \Pi_{\Psi_N}F(\tfrac{1}{2})=\infty .
  	$
  	Thus $L^2$ convergence of the learned maps does not imply pointwise convergence, even for continuous targets and classical dictionaries.
\end{example}
  \begin{example}[Smoothness can restore pointwise convergence]
  	Keep $\Omega$ and $\Psi_N$ as in the previous example. If $F$ is differentiable and extends periodically to $[0,1]$, then $\Pi_{\Psi_N}F(\xv)\to F(\xv)$ as $N \to \infty$ for every $\xv \in [0,1)$ \cite[Cor. 3.3.9]{Grafakos2014-en}. Fourier extensions provide a practical way to recover pointwise convergence almost everywhere for functions that admit a differentiable periodic extension on a larger domain, including in higher dimensions \cite{BOYD2002118,webb2019}.
\end{example}

\begin{proof}[Proof of \cref{prop:petrov}]
	We first notice 
	\begin{equation*}
		\lim_{M \to \infty} G_\Psi 
		\ =\ \left( \langle \psi_i, \psi_j \rangle \right)_{1 \leq i,j \leq N_1} 
		\ =\ \cP_\Psi \cP_\Psi^* . 
	\end{equation*}
	Analogous statement holds for $\Phi$. Similarly, 
	\begin{equation*}
		\lim_{M \to \infty} A 
		\ =\ \big( \langle \psi_i, \cK \phi_j \rangle \big)_{\scriptscriptstyle{\substack{1 \leq i \leq N_1 \\ 1 \leq j \leq N_2}}}
		\ =\ \cP_\Psi \cK \cP_\Phi^* . 
	\end{equation*}
	Since the dictionary functions are linearly independent, the limiting Gram matrices are nonsingular, and hence matrix inversion is continuous along the sequence for all sufficiently large $M$. Therefore
	\begin{equation*}
		\cP_\Psi^* \lim_{M \to \infty} K 
		= \cP_\Psi^* \lim_{M \to \infty} G_\Psi^{-1} A 
		= \cP_\Psi^* (\cP_\Psi \cP_\Psi^*)^{-1} \cP_\Psi \cK \cP_\Phi^* , 
	\end{equation*}
	which, using \cref{eq:orth_projector}, is precisely $\Pi_\Psi \cK \cP_\Phi^*$.  The proof of \cref{eq:both_sides} is identical, using the additional convergence $G_\Phi^{-1}\to(\cP_\Phi\cP_\Phi^*)^{-1}$.
\end{proof}
\begin{proof}[Proof of \cref{cor:learned_map}]
	Let $\widetilde{ \Psi } = (\cP_\Psi \cP_\Psi^*)^{-1/2} \Psi$ be the orthonormal family induced by Gram-Schmidt. Then $\cP_{\widetilde{ \Psi }} = G_\Psi^{-1/2} \cP_\Psi$ so that 
	\begin{equation*}
		(\cP_{\widetilde{ \Psi }} \cP_{\widetilde{ \Psi }}^*)^{-1} 
		\cP_{\widetilde{ \Psi }} \cK \cP_\Phi^*  
		= \lim_{M \to \infty} G_\Psi^{-1/2} A 
		= \lim_{M \to \infty} G_\Psi^{1/2} K . 
	\end{equation*}
	Hence 
	\begin{equation*}
		\begin{split}
			\lim_{M \to \infty} F_{N,M} 
				&= \lim_{M \to \infty} K^\top \Psi 
				= \lim_{M \to \infty} K^\top G_\Psi^{1/2} G_\Psi^{-1/2} \Psi \\ 
			&\quad\quad
			= \cP_\Phi \cK^* \cP_{\widetilde{ \Psi }}^* 
			(\cP_{\widetilde{ \Psi }} \cP_{\widetilde{ \Psi }}^*)^{-1} 
			\widetilde{ \Psi } 
			= \cP_\Phi \cK^* \cP_{\widetilde{ \Psi }}^* \widetilde{ \Psi } . 
		\end{split}
	\end{equation*}
	Let $e_j (x) = [x]_j$ be the $j$th component function. Now 
	\begin{equation*}
		\left[ 
			\cP_\Phi \cK^* \cP_{\widetilde{ \Psi }}^* \widetilde{ \Psi } (\xv)
		\right]_j 
		=\ \Big\langle 
			e_j, \cK^* \sum_{\ell = 1}^{N_1} \widetilde{ \psi }_\ell (\xv) \widetilde{ \psi }_\ell
		 \Big\rangle 
		= \sum_{\ell = 1}^{N_1} \left\langle 
			e_j, \cK^* \widetilde{ \psi }_\ell
		 \right\rangle \widetilde{ \psi }_\ell (\xv) .
	\end{equation*}
	Since $\widetilde{ \Psi }$ is orthonormal we have 
	\begin{equation*}
		\sum_{\ell = 1}^{N_1} \left\langle 
			e_j, \cK^* \widetilde{ \psi }_\ell
		\right\rangle \widetilde{ \psi }_\ell (\xv)
		= \left( 
			\sum_{\ell = 1}^{N_1} \left\langle 
				\cK e_j, \widetilde{ \psi }_\ell
		 	\right\rangle \widetilde{ \psi }_\ell
		\right) (\xv)
		= \left( 
			\Pi_{\widetilde{ \Psi }} \cK e_j
		 \right) (\xv) 
		= \left[ \left( 
			\Pi_{\widetilde{ \Psi }} F
		 \right) (\xv) \right]_j 
	\end{equation*}
	Noting that $\Pi_{\widetilde{ \Psi }} = \Pi_\Psi$ and since $\Psi = \Psi_N$ spans a dense subset of $L^2(\Omega;\R)$ in the limit $N \to \infty$, the projected coordinate functions converge in $L^2(\Omega;\R)$ to the coordinate functions of $F$. This proves the claimed convergence of the learned map.
\end{proof}

\subsection{Regularization}\label{sec:regularize}

The least-squares solution of \cref{eq:petrov} is explicit, but applications often call for more structure. Sparsity is the simplest example. One may replace \cref{eq:petrov} by
\begin{equation*}
	\min_{K \in \R^{N_1 \times N_2}}
	\big\| W^{1/2}\big(\Psi(\Xv)K-\Phi(\Yv)\big)\big\|_F^2
	+ \lambda \|K\|_1
\end{equation*}
penalizes the number and size of active dictionary couplings, with $\lambda \geq 0$ setting the sparsity level. This objective generally loses the closed-form pseudoinverse solution, but it can be solved by standard sparse-regression algorithms \cite{bach2011optimizationsparsityinducingpenalties}. When $\Phi=id$ and $\Psi$ is a nonlinear library, this is the discrete-time, sparsity-promoting form of the SINDy architecture: a small set of nonlinear observables is selected to predict the next state \cite{sindy}.

Low rank gives a second regularization. It is useful when the dictionary is redundant, or nearly so, and several observables carry almost the same information. Let
\begin{equation*}
W^{1/2} \Psi(\Xv) \approx U\Sigma V^\top
\end{equation*}
be a truncated SVD, with $U \in \R^{M \times r}$, $\Sigma \in \R^{r \times r}$, and $V \in \R^{N_1 \times r}$. Then the least-squares solution is approximated by
\begin{equation*}
K \approx V\Sigma^\dagger U^\top W^{1/2} \Phi(\Yv).
\end{equation*}
In the square case $\Psi=\Phi$, one may work with the compressed matrix
\begin{equation*}
\widetilde{K}=V^\top KV=\Sigma^\dagger U^\top W^{1/2} \Phi(\Yv)V \in \R^{r \times r},
\end{equation*}
whose eigenvectors lift back to the original dictionary coordinates by multiplication with $V$. This is the computational core of Exact DMD: the dominant spectral information is obtained from an $r \times r$ matrix rather than the full dictionary matrix. For $\Psi\neq\Phi$, the same compression gives
\begin{equation*}
\widetilde{K}=V^\top K=\Sigma^\dagger U^\top W^{1/2} \Phi(\Yv) \in \R^{r \times N_2}.
\end{equation*}
Since $V$ is an isometry on the retained trial subspace, the singular values and right singular vectors of $K$ are obtained from $\widetilde{K}$, while the left singular vectors are lifted by $V$. Thus the rectangular Petrov--Galerkin singular modes can be computed from the smaller regression, as summarized in \cref{alg:one_side}.

\section{Singular Modes}\label{sec:singular}

A square Koopman matrix invites eigenvectors.  A rectangular Petrov--Galerkin matrix does not, and this is a feature rather than a defect.  This section studies the singular modes of $\Pi_\Psi \cK \cP_\Phi^*$ produced in \cref{alg:one_side}. Given an SVD $K = S \Lambda^{1/2} Z^\top$ the singular modes are the functions 
\begin{equation*}
	\varphi_k = \sum_{\ell = 1}^{N_1} S_{\ell k} \psi_\ell \quad \text{and} \quad
	\zeta_j = \sum_{\iota = 1}^{N_2} Z_{\iota j} \phi_\iota, \quad 
	1 \leq k,j \leq r .  
\end{equation*}
These can be evaluated at the snapshots via $\Psi(\Xv) S$ and $\Phi (\Xv) Z$, respectively. 

The right modes are the observables in $\spn\Phi$ whose Koopman images are best seen by the trial space; the left modes are those images after projection into $\spn\Psi$.  We show that the right modes are governed by $\cK^*\cK$ and hence by future mass, while the left modes are governed by $\cK\cK^*$ and hence by folding, compression, and preimages. This distinction is particularly important
when $\Phi$ consists only of coordinate functions, as in \shortname{} (\cref{alg:SINDy}), rather
than a full dictionary of nonlinear observables. \Cref{tab:singular_mode_interpretation}
summarizes the resulting interpretations.

\begin{table*}[t]
	\centering
	\scriptsize
	\renewcommand{\arraystretch}{1.0}
	\begin{tabularx}{\textwidth}{
		>{\raggedright\arraybackslash}p{0.18\textwidth}
		>{\raggedright\arraybackslash}X
		>{\raggedright\arraybackslash}X
		>{\raggedright\arraybackslash}X}
		\toprule
		Setting & Right singular modes $\zeta_j \in \spn \Phi$
		& Left singular modes $\varphi_j \in \spn \Psi$
		& Singular values \\
		\midrule
		General Petrov--Galerkin, $\Psi \neq \Phi$
		& Observables in $\spn \Phi$ whose Koopman images are strongly resolved
		in $\spn \Psi$.
		& Projected future images $\varphi_j = \Pi_\Psi \cK \zeta_j$ in the
		trial space.
		& Coupling strengths between selected observables and their projected
		future images. \\
		\addlinespace
		Coordinate space, $\Phi = id$
		& Orthogonal combinations of state coordinates that maximize
		future variance under the pushforward measure.
		& Nonlinear reconstructions of those evolved coordinate directions in
		$\spn \Psi$.
		& Predictive importance of each physical direction after nonlinear
		lifting. \\
		\addlinespace
		Full observable dictionary, $\Phi$ rich
		& Observable patterns, not coordinate axes; they maximize future
		$L^2$ mass within $\spn \Phi$.
		& Koopman-advanced versions of those patterns, projected into
		$\spn \Psi$.
		& Transfer strength between the two finite observable spaces. \\
		\addlinespace
		Square EDMD-type case, $\Psi = \Phi$
		& Finite-time amplification or preservation directions for observables,
		complementary to eigenmodes.
		& Paired output-side observable structures; mismatch reveals non-normal
		finite-time effects.
		& Finite-time gain under the projected Koopman operator. \\
		\addlinespace
		Enriched trial space, $\Phi \subset \Psi$ or $N_1 > N_2$
		& Coarser observables whose evolved images are tested in a richer
		trial space.
		& Higher-resolution evolved modes, useful when simple observables map
		to more complicated functions.
		& A diagnostic for whether $\Psi$ retains the evolved content of
		$\Phi$. \\
		\bottomrule
	\end{tabularx}
		\caption{
			Interpretation of the singular mode decomposition
			$K = S \Lambda^{1/2} Z^\top$. Right-mode interpretation depends on
			$\Phi$: coordinate functions give physical directions, while richer
			dictionaries give observable combinations. The left modes are best read as the
			associated projected future images in the trial dictionary $\Psi$.
		}
		\label{tab:singular_mode_interpretation}
	\end{table*}

\subsection{Right Singular Modes}

The right singular modes are eigenvectors/eigenfunctions of the operator 
\begin{equation*}
	\cP_\Phi \cK^* \Pi_\Psi \cK \cP_\Phi^* 
	\quad \text{resp.}\quad  
	\Pi_\Phi \cK^* \Pi_\Psi \cK \Pi_\Phi^* . 
\end{equation*}
Taking $\Psi = \Psi_{N_1}$ to be a sequence of dictionaries which span a dense subset of $L^2(\Omega;\R)$ in the limit $N_1 \to \infty$, we have 
\begin{equation}\label{eq:right_fold}
	\lim_{N_1 \to \infty} \cP_\Phi \cK^* \Pi_{\Psi_{N_1}} \cK \cP_\Phi^* 
	= \cP_\Phi \cK^* \cK \cP_\Phi^* 
	\in \R^{N_2 \times N_2}
\end{equation}
in operator norm (since $\Phi$ is finite). Thus, in the rich-trial-space limit, the right modes are governed by $\cK^*\cK$. We will make use of the following fact for Koopman operators (see, e.g., \cite{mattbook}). Let $F_\sharp m$ be the pushforward of the standard Lebesgue measure $m$ on $\Omega$ (i.e., $F_\sharp m (A) = m (F^{-1} (A))$ where $F^{-1} (A)$ is the preimage). 
\begin{lemma}\label{lem:mult}
	Suppose that $F_\sharp m$ is absolutely continuous with respect to $m$, and let $\rho = \frac{dF_\sharp m}{dm}$ be essentially bounded. Then
  	\begin{equation*}
  		(\cK^*\cK g)(x)=g(x)\rho(x).
  	\end{equation*}
\end{lemma}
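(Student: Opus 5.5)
We work with the quadratic form of $\cK^*\cK$ and then polarize. For $f,g\in L^2(\Omega;\R)$ we have $\langle \cK^*\cK g, f\rangle = \langle \cK g,\cK f\rangle = \int_\Omega g(F(x))\,f(F(x))\,\intd m(x)$. By the definition of the pushforward, $\int_\Omega h\circ F\,\intd m = \int_\Omega h\,\intd(F_\sharp m)$ for nonnegative measurable $h$. We first verify this on indicators, where it reads $m(F^{-1}(A)) = F_\sharp m(A)$. It then extends to simple functions by linearity and to nonnegative measurable $h$ by monotone convergence. Absolute continuity gives $\intd(F_\sharp m) = \rho\,\intd m$, so
\begin{equation*}
	\int_\Omega h\circ F\,\intd m = \int_\Omega h\,\rho\,\intd m .
\end{equation*}

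Next we remove the sign restriction. For $h = gf$ with $g,f\in L^2$, split $h=h^+-h^-$ and apply the identity to each part. Both sides are finite, since $\int |h|\rho\,\intd m \le \|\rho\|_\infty \|g\|\,\|f\| <\infty$ by Cauchy--Schwarz. This also shows that $\cK$ is bounded with $\|\cK\|^2\le\|\rho\|_\infty$, so $\cK^*$ is well-defined. We therefore obtain
\begin{equation*}
	\langle \cK^*\cK g,f\rangle = \int_\Omega g\,f\,\rho\,\intd m = \langle \rho g, f\rangle .
\end{equation*}
Since $\rho\in L^\infty$, the function $\rho g$ lies in $L^2$. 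Because the identity holds for every $f\in L^2$, we conclude $\cK^*\cK g=\rho g$ almost everywhere, which is the claim.

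The main technical point is the change of variables for signed integrands. It requires integrability of $|gf|\circ F$, which is exactly where the boundedness of $\rho$ enters, via the nonnegative case applied to $|gf|$. A secondary detail is well-definedness of $\cK$ on equivalence classes. If $g=0$ $m$-a.e., then $g\circ F=0$ $m$-a.e. because $m(F^{-1}(N)) = \int_N \rho\,\intd m = 0$ for every $m$-null set $N$. This again follows from absolute continuity.
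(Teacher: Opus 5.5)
Your proposal is correct and follows the same route as the paper: pair $\cK^*\cK g$ with a test function $f$, rewrite $\langle \cK g,\cK f\rangle$ through the pushforward change of variables and the density $\rho$, and conclude $\cK^*\cK g=\rho g$ by duality. The paper states this as a one-line chain of equalities. You additionally justify the steps it leaves implicit: boundedness of $\cK$ via $\|\cK\|^2\le\|\rho\|_\infty$, integrability of the signed integrand $gf$, and well-definedness of $\cK$ on $m$-a.e.\ equivalence classes.
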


	The proof of this lemma is immediate from the inner product representation
	$$
		\langle f, \cK^* \cK g \rangle
		= \langle \cK f, \cK g \rangle 
		= \int f \circ F \cdot g \circ F \,\mathrm{d} m 
		= \int f \cdot g \,\mathrm{d} F_\sharp m 
		= \int f \cdot g \cdot \rho \,\mathrm{d} m 
		= \langle f, g \rho \rangle . 
		$$
Combining \cref{lem:mult} with \cref{eq:right_fold}, the limiting right-mode problem is governed by
\begin{equation*}
	\cP_\Phi \cK^* \cK \cP_\Phi^* = \left( \langle 
		\phi_i, \phi_j
	\rangle_{L^2(\rho)} \right)_{1 \leq i,j \leq N_2} . 
\end{equation*}
The eigenvectors $Z_{:,k}$ therefore satisfy the Rayleigh principle
\begin{equation}\label{eq:rayleigh}
	\lambda_{k + 1} = \max \Big\{ 
		\| \phi \|_{L^2 (\rho)}^2
		\ \Big|\ \phi = \sum_{\ell = 1}^{N_2} [z]_\ell\, \phi_\ell,\;\ z^\top z = 1,\;\ 
		z \perp Z_{:,j},\;\ \forall\, 1 \leq j \leq k
	\Big\} . 
\end{equation}
In particular, when $\Phi$ is orthonormal, then 
\begin{equation*}
	\lambda_{k + 1} = \max \left\{ 
		\frac{\| \zeta \|_{L^2 (\rho)}^2}{\| \zeta \|_{L^2 (m)}^2 }
		\ \Big|\ \zeta \in \spn\, \Phi,\;\ 
		\zeta \perp \zeta_j\;\ \forall\, 1 \leq j \leq k
	\right\} . 
\end{equation*}
From this we conclude that the right singular modes $\zeta_k = \sum_{\ell = 1}^{N_2} Z_{\ell,k} \phi_\ell$ are preciseley the mutually orthogonal observables in $\spn\Phi$ with maximal future $L^2$ mass.

Arnold's cat map gives a clean illustration. Let $F:\bT^2\to\bT^2$ be
\begin{equation*}
	F 
	\begin{pmatrix}
		x \\ y
	\end{pmatrix} =
	\begin{pmatrix}
		2x + y \mod 1 \\ 
		x + y \mod 1
	\end{pmatrix} , 
\end{equation*}
a classically studied map on the torus.  Since $F$ is measure-preserving, $\rho=1$, and the Rayleigh quotient in \cref{eq:rayleigh} is identically one in the full-space limit. Finite sections can nevertheless lose modes. For the real Fourier block
\[
	C_{n,m}(x,y)=\cos(2\pi(nx+my)),\quad
	S_{n,m}(x,y)=\sin(2\pi(nx+my)),
\]
the Koopman action is
\begin{equation*}
\cK C_{n,m}=C_{2n+m,n+m},\quad
\cK S_{n,m}=S_{2n+m,n+m}.
\end{equation*}
Thus, if $\Phi$ contains all sine--cosine blocks with $|n|,|m|\leq N$ and $\Psi$ contains all blocks with $|n|,|m|\leq 3N$, every Koopman image of a test block is retained and all singular values are one. A square truncation need not have this property: blocks whose images leave the truncated box are projected away, producing zero singular values. This is the practical advantage of the Petrov--Galerkin view: a rectangular trial space can preserve the dynamically generated content that a square EDMD section discards.

\Cref{eq:rayleigh} is especially revealing when $\Phi=id$, the prediction setting of \shortname{} (\cref{alg:SINDy}). In this case the columns of $Z$ rotate the coordinate observables into directions that best capture future states: they maximize variance with respect to the pushforward density $\rho$. More precisely, if the coordinate functions are centered, then for $z \in \R^d$,
\begin{equation}\label{eq:variance}
    \Var_\rho \Big( \sum_{\ell = 1}^d [z]_\ell \,[\cdot]_\ell \Big) 
    = \| z^\top \cdot \,\|_{L^2 (\rho)}^2 
    = \| z^\top F(\cdot) \|_{L^2 (m)}^2 . 
\end{equation}
As a concrete example, take $F$ to be the inverse Arnold cat map and parameterize the torus by $[-1/2,1/2]^2_{\mathrm{per}}$. Let $\Phi=\sqrt{12}\,id$, so that the coordinate functions are normalized, and choose an orthonormal family $\Psi \subset L^2([-1/2,1/2]^2_{\mathrm{per}};\R)$ for which $\Pi_\Psi F^{-1}=F^{-1}$. The singular modes selected by \cref{alg:one_side} are shown in \cref{fig:arnold_sindy}; their analytic computation is given in \cref{sec:arnold_sindy}.

\begin{figure}
	\centering
	\begin{subfigure}{0.37\textwidth}
		\centering
		\includegraphics[width=\linewidth]{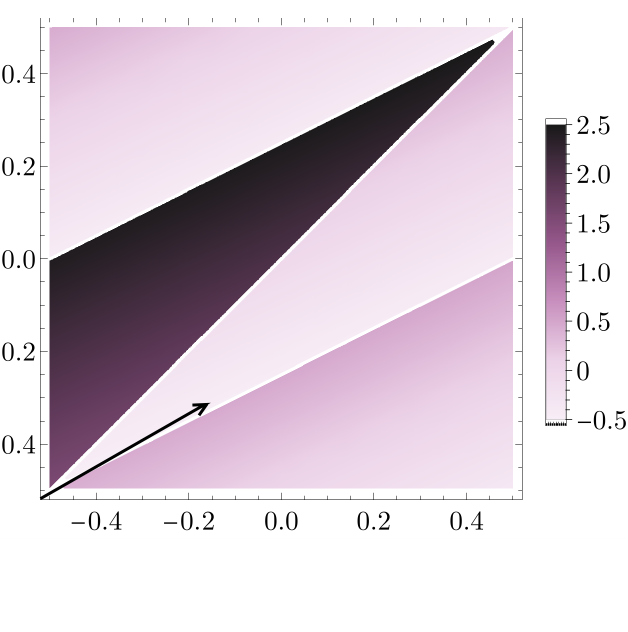}
	\end{subfigure}
	\hfill
	\begin{subfigure}{0.37\textwidth}
		\centering
		\includegraphics[width=\linewidth]{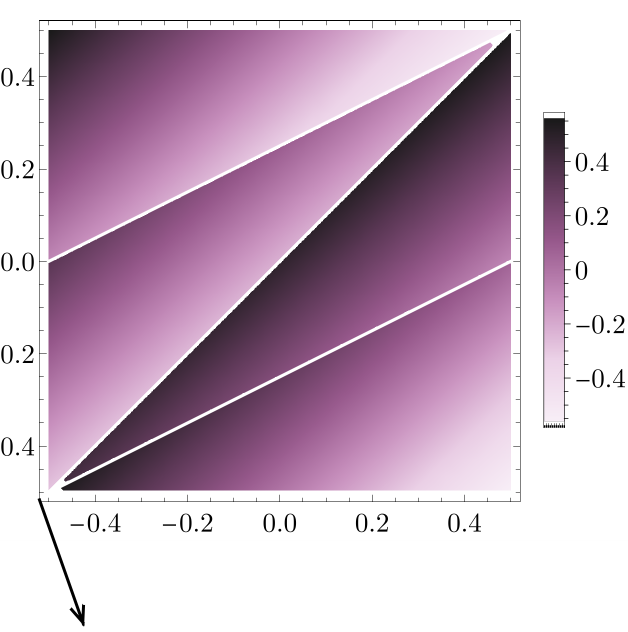}
	\end{subfigure}
	\hfill
	\begin{subfigure}{0.21\textwidth}
		\centering
		\includegraphics[width=\textwidth]{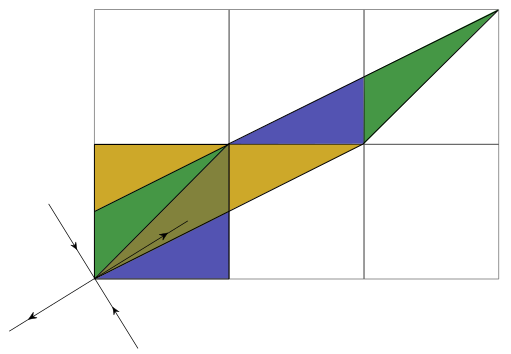}
		\vspace{8ex}
	\end{subfigure}
	\caption{
  		Singular modes for the inverse Arnold cat map. The colored fields show the right singular modes selected by \cref{alg:one_side}; the arrows indicate the corresponding coordinate directions. These directions align closely with the expanding and contracting directions of the linear map, shown schematically on the right, but they are not identical. The singular modes optimize future variance in the measure-theoretic sense of \cref{eq:variance}, rather than topological stretching alone. 
	}\label{fig:arnold_sindy}
\end{figure}

\subsection{Left Singular Modes}

The left singular modes admit a dual interpretation. Since $\varphi_j=\Pi_\Psi \cK \zeta_j$, they are the Koopman-evolved right singular modes, resolved in the trial space $\spn\Psi$. To isolate the limiting structure, consider the fully projected operator $\Pi_\Psi\cK\Pi_\Phi$. If $\Psi$ is fixed and $\Phi=\Phi_{N_2}$ becomes dense in $L^2(\Omega;\R)$ as $N_2\to\infty$, then the left singular modes are governed by $\Pi_\Psi \cK\cK^*\Pi_\Psi$. This operator has an explicit form for nonsingular, finite-to-one $C^1$ maps with nonzero Jacobian determinant almost everywhere. In that case
\begin{equation*}
	(\cK^* h)(\yv) =
	\sum_{\xv \in F^{-1}(\yv)}
	\frac{h(\xv)}{|\det D F(\xv)|} ,
\end{equation*}
and therefore
\begin{equation*}
	(\cK \cK^* h)(\xv)
	=
	\sum_{\widetilde{\xv}\, :\, F(\widetilde{\xv}) = F(\xv)}
	\frac{h(\widetilde{\xv})}{|\det D F(\widetilde{\xv})|}.
\end{equation*}
Thus the left singular modes are the observables in $\spn \Psi$ whose weighted sums over preimage fibers have maximal $L^2$ energy. Values on points that are mapped to the same future state add constructively or cancel, while the Jacobian weights emphasize regions whose volume is compressed by the map.  If $F$ is injective, this reduces to
\begin{equation*}
	(\cK \cK^* h)(\xv)
	=
	\frac{h(\xv)}{|\det D F(\xv)|},
\end{equation*}
so the left modes concentrate on directions resolved by $\Psi$ where the inverse-volume weight is large. The conceptual duality of this multiplication operator with $\cK^* \cK$ (cf. \cref{lem:mult}) should be emphasized: $\cK^*\cK$ measures future mass, while $\cK\cK^*$ measures how evolved observables fold back geometrically through the preimage. 

Returning to Arnold's cat map, the Koopman operator is orthogonal on the real $L^2$ space. The right singular modes for the map are therefore the left singular modes for the inverse map, which is precisely the relationship illustrated in \cref{fig:arnold_sindy}.

\section{Examples}\label{sec:examples}

We test the method on four problems of increasing difficulty: a chaotic cubic map, the periodically driven double gyre, the Lorenz--63 system, and a pitching-airfoil wake. Each example stresses a different aspect of the framework. For the cubic map, pointwise prediction gives way to invariant statistics. For the double gyre, singular modes reveal coherent transport. For Lorenz--63, nonlinear lifting improves finite-time prediction while preserving the statistical shape of the attractor. For the airfoil data, the singular spectrum gives a practical rank.

\subsection{Cubic map}

We begin with a one-dimensional chaotic map,
\begin{equation*}
F(x) = \alpha x(1 - x^2),
\end{equation*}
where $\alpha \approx 2.598076211353312$ makes $F$ surjective from $[0,1]$ to $[0,1]$. We train \shortname{} from a single trajectory of length $M + 1$, giving $M = 1000$ snapshot pairs, using the dictionary
\begin{equation*}
	\Psi(x)
	=
	\begin{pmatrix}
		x & x^2 & x^3 & x^4 & x^5
		& \sin(\pi x) & \sin(2\pi x)
		& \cos(\pi x) & \cos(2\pi x)
	\end{pmatrix}^{\top}.
\end{equation*}
For this example, the model class contains the dynamics exactly:
\begin{equation*}
	K_{\mathrm{true}} =
	\begin{pmatrix}
		\alpha & 0 & -\alpha & 0 & 0 & 0 & 0 & 0 & 0
	\end{pmatrix}.
\end{equation*}
Here $\Phi = id$ and there is only one coordinate direction, so the coordinate-space row of
\cref{tab:singular_mode_interpretation} reduces to a scalar test: whether the nonlinear trial
dictionary carries the future state strongly enough to preserve both trajectory-level information
and invariant statistics.

\begin{figure}
	\centering
	\includegraphics[width=0.8\linewidth]{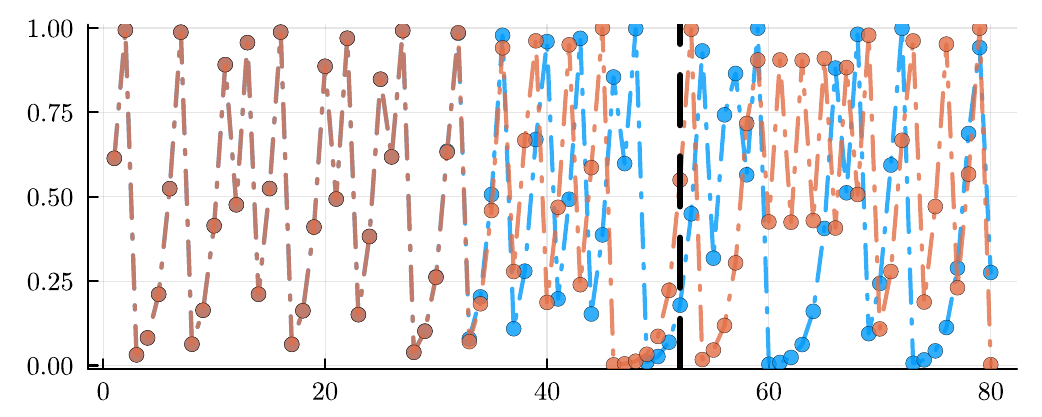}
	\caption{
		Forecast of the chaotic cubic map from a model trained with $M = 5{,}000$ snapshots. The blue curve is the true trajectory and the orange curve is the \shortname{} prediction. The trajectories agree until the Lyapunov exponent divided by machine precision, indicated by the black dashed line; after this horizon, trajectory-level agreement is no longer meaningful for a chaotic system.
	}
	\label{cubic2}
\end{figure}

\begin{figure}
	\centering
	\includegraphics[width=0.6\linewidth]{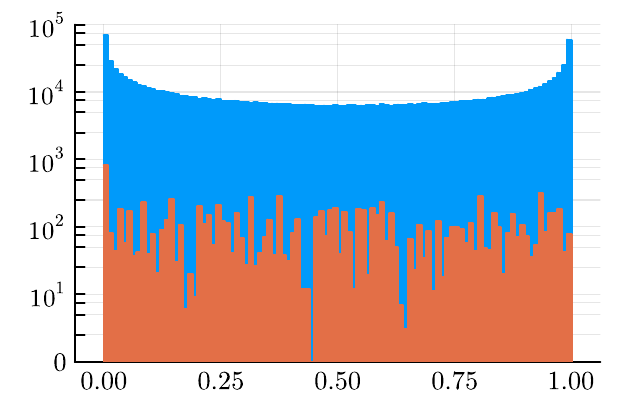}
	\caption{
		Long-time statistics for the cubic map over a trajectory of length $10^6$. The blue histogram shows the true invariant statistics on a logarithmic scale, while the orange bars show the binwise discrepancy for the \shortname{} model. Although individual trajectories separate after the Lyapunov horizon, the learned map preserves the invariant measure to within approximately $1\%$.
	}
	\label{cubic3}
\end{figure}

With $M = 5{,}000$, \shortname{} reproduces the true trajectory until the chaotic shadowing horizon set by the Lyapunov exponent and machine precision; see \cref{cubic2}. This is the strongest trajectory-level agreement one should expect in a chaotic system. The more important test is statistical. We iterate the learned model for $10^6$ steps and compare the resulting histogram with that of the true map in \cref{cubic3}. The invariant statistics agree at the percent level, showing that the Petrov--Galerkin predictor captures both short-time evolution and long-time measure.

\subsection{Periodically Driven Double Gyre}

We consider an (initially) nonautonomous flow on the domain $[0,2] \times [0,1]$ governed by 
$$
		\dot{x} = - \pi A \sin (\pi f(t, x)) \cos (\pi y),\qquad 
		\dot{y} = \pi A \cos (\pi f(t, x)) \sin (\pi y) \frac{\partial f}{\partial x} (t, x)
$$
where $f(t,x) = \alpha \sin(\omega t)x^2 + (1 - 2\alpha \sin(\omega t))x$. With $A = 0.25$, $\alpha = 0.25$, and $\omega = 2\pi$, the vector field has period one, so its time-$1$ stroboscopic map defines an autonomous discrete-time system. The flow consists of two counter-rotating gyres separated by an oscillating vertical boundary, allowing particles to exchange intermittently between the two regions \cite{Froyland_2017}. This transport mechanism is a standard failure mode for purely linear prediction methods such as DMD. We apply \shortname{} using $50$ Cartesian products of Legendre polynomials, $(x,y) \mapsto P_i(x)P_j(y)$, with $(i,j) = (0,0),(1,0),(0,1),(1,1),(2,1),\ldots$. Following the coordinate-centering lesson from the cubic-map example, we transplant the domain to $[-1,1]\times[-1,1]$ and rescale the dictionary accordingly. The regression uses $250$ Gauss--Legendre nodes in each dimension, for a total of $62{,}500$ snapshots. 

Because the time-$1$ double-gyre map is volume-preserving, the Rayleigh quotient in \cref{eq:rayleigh} is constant: in the infinite-resolution limit, no coordinate direction is intrinsically preferred. The finite computation tells a more useful story. Quadrature and dictionary truncation introduce a small nonuniformity in the numerical pushforward $\rho = \tfrac{d f_\sharp m}{d m}$, concentrating slightly more mass near the gyre cores. The right singular vectors respond exactly as \cref{eq:variance} predicts: they choose the coordinate directions with maximal variance under this approximate future distribution; see \cref{fig:gyre_sindy} and the coordinate-space interpretation in \cref{tab:singular_mode_interpretation}. 

\begin{figure}
	\centering
	\includegraphics[width=0.6\linewidth]{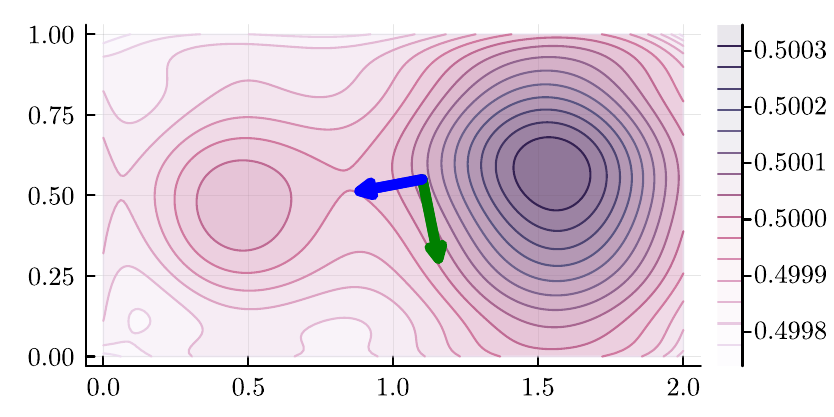}
	\caption{
		Numerical pushforward weighting for the periodically driven double gyre, together with the leading coordinate directions selected by \cref{alg:SINDy,alg:one_side}. The first direction, shown in green, has singular value $0.953$ and corresponds to the linear observable $x \mapsto -0.981x_1 - 0.193x_2$. The second direction, shown in blue, has singular value $0.929$. These directions maximize variance with respect to the computed pushforward measure, illustrating the coordinate-selection mechanism in \cref{eq:variance}.
	}
	\label{fig:gyre_sindy}
\end{figure}

\begin{figure}
	\centering
	\includegraphics[width=0.9\linewidth]{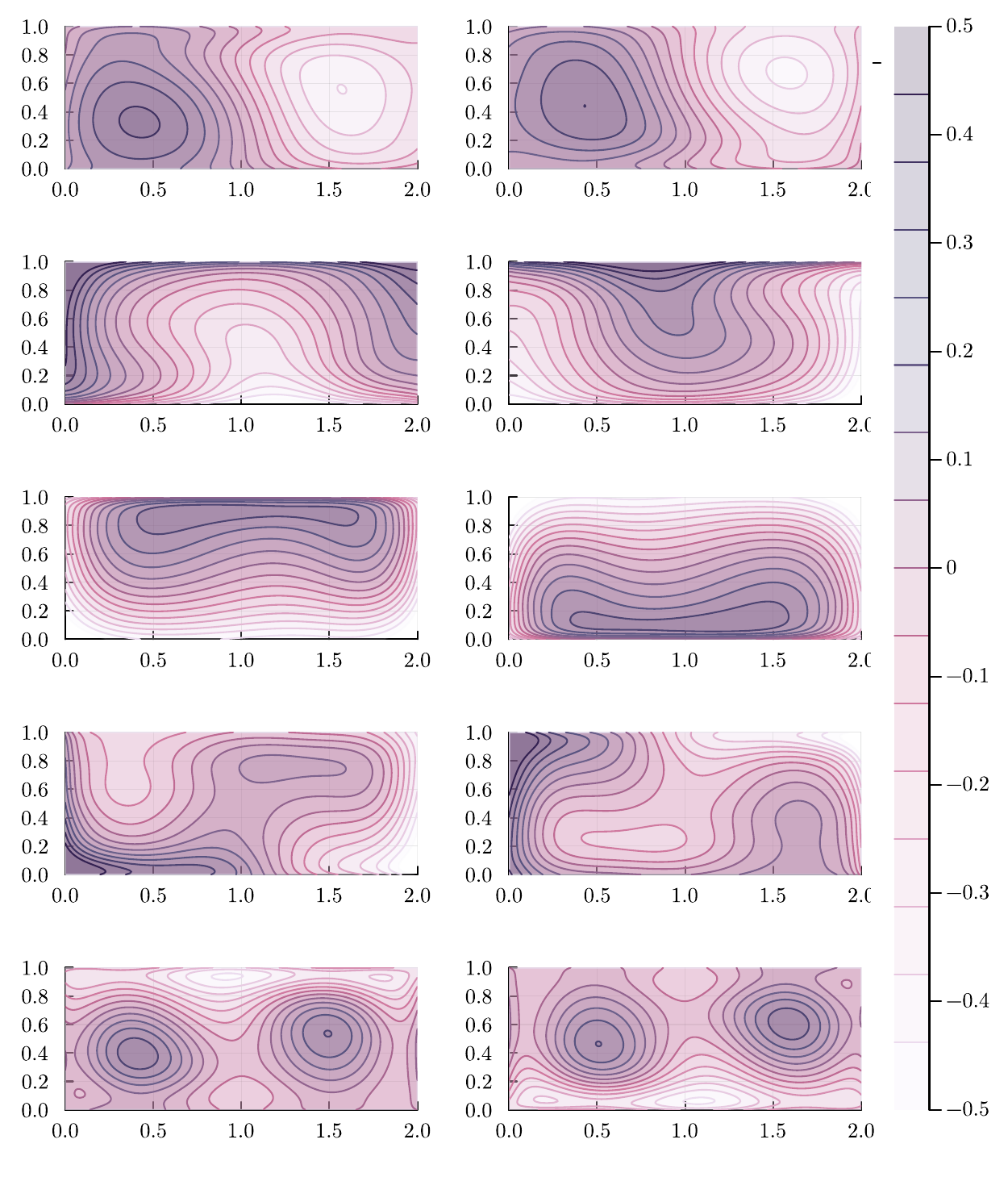}
	\caption{
		Leading singular modes for the periodically driven double gyre. The test dictionary $\Phi$ contains $50$ Cartesian products of Legendre polynomials, $(x,y) \mapsto P_i(x)P_j(y)$, transplanted to the double-gyre domain, while the trial dictionary $\Psi$ uses twice as many Legendre products to resolve the evolved observables. The top modes separate the two gyres and their transport boundary, matching the coherent-set structure expected for this flow. Higher modes refine this geometry near the upper and lower transport channels.
	}
	\label{fig:gyre_sings}
\end{figure}

We also apply \cref{alg:one_side} with the same setup but reduce $\Phi$ to $25$ Legendre-product observables. The leading singular mode, split by sign, recovers the two coherent sets reported in \cite{Froyland_2017}; see \cref{fig:gyre_sings}. Here the enriched-trial-space row of \cref{tab:singular_mode_interpretation} is used: coarser Legendre observables in $\Phi$ evolve into higher-resolution projected modes in $\Psi$. Thus the singular-mode calculation does more than predict trajectories: it extracts the transport geometry encoded by the rectangular Petrov--Galerkin regression.

\subsection{The Lorenz--63 Equations}

\begin{figure}
	\centering
	\includegraphics[width=0.99\linewidth]{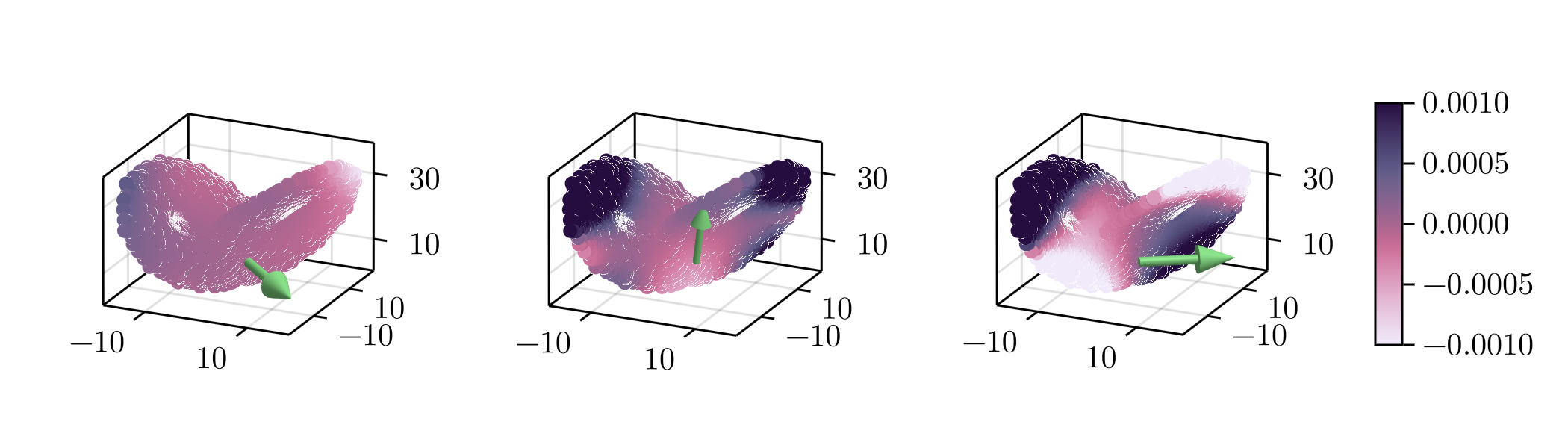}
	\caption{
		\shortname{} singular modes for Lorenz--63. Each panel shows one left singular mode evaluated on trajectory samples from the Lorenz attractor, with color indicating the mode amplitude. The green arrow gives the associated right singular direction in coordinate space. These paired objects show how the learned Petrov--Galerkin map selects physical coordinate combinations and resolves their nonlinear Koopman-advanced images on the attractor.
	}
	\label{fig:lorenz_one_side}
\end{figure}

\begin{figure}
	\centering
	\includegraphics[width=0.8\linewidth]{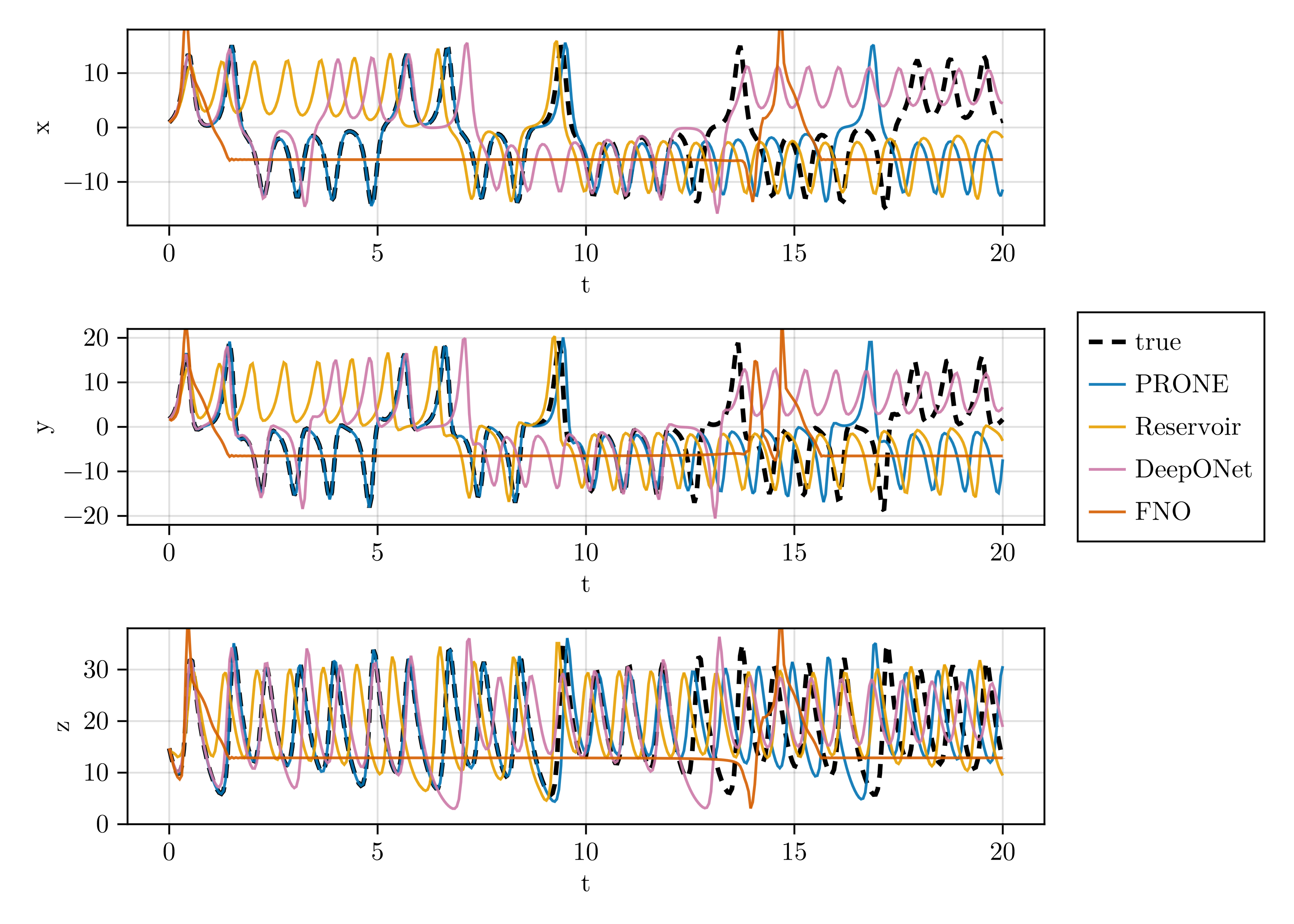}
	\caption{
		Coordinate-wise Lorenz--63 forecasts from a generic initial condition. The black dashed curve is the true trajectory, blue is \shortname{}, yellow is the reservoir computer, green is EDMD, pink is a DeepONet, orange is a Fourier neural operator. 
	}
	\label{fig:lorenz_comparison}
\end{figure}

Lorenz's 1963 equations 
\begin{equation*}
    \dot{x} = \sigma (y - x), \quad 
    \dot{y} = x (r - z) - y, \quad
    \dot{z} = x y - b z
\end{equation*}
with the classical parameters $\sigma = 10$, $r = 28$, and $b = 8/3$, provide a compact three-dimensional test case for nonlinear Petrov--Galerkin learning. We compute \cref{eq:one_side} using the polynomial dictionary $(x,y,z) \mapsto x^{p_1}y^{p_2}z^{p_3}$ with $p_1,p_2,p_3 \leq 3$, giving $64$ observables. The data consist of $10{,}000$ points drawn from $25$ trajectories initialized randomly near the origin and sampled with time step $\Delta t = 0.1$; these trajectory samples serve as ergodic quadrature points. \Cref{fig:lorenz_one_side} shows the output of \cref{alg:one_side}, with the coordinate frame selected by \shortname{} superimposed. The selected directions again maximize future variance, now on the Lorenz attractor rather than on a prescribed quadrature grid. Referring back to \cref{tab:singular_mode_interpretation}, the green arrows are right modes in coordinate space, while the colored fields are the corresponding projected future images in $\spn\Psi$.

Predictive performance for a representative initial condition is shown in \cref{fig:lorenz_comparison}, where the predictions are compared across all three Lorenz state variables. We benchmark \shortname{} against
\begin{itemize}[leftmargin=1em]
    \item a reservoir computer with a reservoir dimension $128$,
    \item a DeepONet with branch-network layer widths $(3,32,32,16)$ and trunk-network layer widths $(1,32,32,16)$,
    \item a Fourier neural operator with channel widths $(1,12,24,24,12,1)$ across successive layers.
\end{itemize}
All methods are trained on the same $M$ training samples. The DeepONet and Fourier neural operator architectures were selected through a hyperparameter sweep, subject to a budget of at most $10{,}000$ trainable parameters per model.
   
\shortname{} uses only $64$ polynomial observables and reevaluates the nonlinear lift $\Psi : \R^3 \to \R^{64}$ at every time step. This relifting is crucial: EDMD rapidly collapses toward a nearly steady trajectory. The resulting learned map tracks the true trajectory substantially longer than all other tested methods despite having (by far) the least number of parameters. We compute the normalized 50-step-ahead root mean square error  
\begin{equation*}
    \textrm{NRMSE}(x,\hat{x}) = 
    \sum_{k=1}^{50} 
        | x(t + k \Delta t) - \hat{x}(t + k \Delta t) |^2
    \,\Big/\, 
    \sum_{k=1}^{50} | x(t + k \Delta t) |^2
\end{equation*}
and average this over $150$ trajectories. The result is shown in \cref{tab:errors}. 

Finally, to demonstrate the long-term statistical behavior of the system we compute the Sinai-Ruelle-Bowen (SRB) measure which we approximate by ergodic sampling, using $150$ trajectories over $t \in [0,25]$ and binning the resulting points on a $40 \times 40 \times 40$ grid. The measures in \cref{fig:lorenz_invariant_measure} show that \shortname{} recovers the two-lobed Lorenz attractor and its dominant mass distribution, while the reservoir model captures the broad geometry but distorts the density along the wings. We compute the $L^1$ error between the true and predicted invariant densities, $\int_{\mathbb{R}^3} | \rho - \hat{\rho} |$, in \cref{tab:errors}. 

\begin{table}
    \centering
    \begin{tabular}{rcc}
         \hline
         Model & Short-term prediction error & Long-term statistics error  \\
         \hline
         \shortname{} & $8.78$ & $0.540$ \\ 
         DeepONet & $693$ & $1.43$ \\ 
         Fourier neural operator & $177$ & $2.00$ \\ 
         Reservoir computer & $136$ & $0.90$ \\ 
         \hline
    \end{tabular}
    \caption{
        Short-term and long-term errors to three significant figures for the tested models. 
    }
    \label{tab:errors}
\end{table}

\begin{figure}
	\centering
	\includegraphics[width=0.9\linewidth]{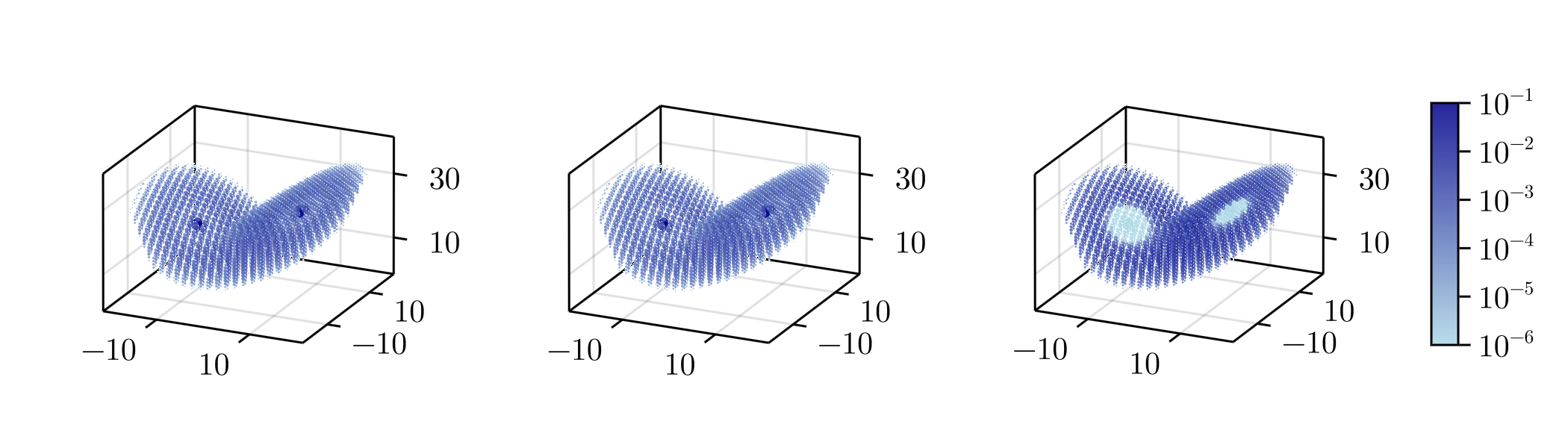}
	\caption{
		Approximate SRB measures for Lorenz--63, computed by ergodic sampling and binning on a $40 \times 40 \times 40$ grid. Left: the true flow. Middle: the measure generated by the \shortname{} predictor. Right: the measure generated by the reservoir computer. \shortname{} preserves the two-wing attractor geometry and the dominant density distribution, while the reservoir model captures the broad support but distorts the mass along the wings.
	}
	\label{fig:lorenz_invariant_measure}
\end{figure}

\subsection{Pitching Airfoil in Low-Reynolds Number Fluid}

We close with a high-dimensional fluid example: the wake of a pitching flat-plate airfoil at low Reynolds number \cite{Towne2022-ae}. Pitching airfoils are relevant to small engineered and biological flyers, where rapid wing motions generate strongly unsteady aerodynamic loads. Accurate reduced-order models are therefore essential. Each snapshot contains the $x$- and $y$-velocity components on four nested $600 \times 300$ grids. We use $M = 950$ snapshots and radial basis functions $\psi_c(x) = \exp(-\|x - c\|^2/\varepsilon)$, with $N = 90$ centers $c$ chosen by uniform subsampling of the trajectory and bandwidth $\varepsilon = 10^{-3}$. The resulting left singular modes are shown in \cref{fig:airfoil}: the leading modes resolve coherent wake structures shed from the airfoil, while higher modes capture finer downstream oscillations (see "left singular modes" in coordinate space in \cref{tab:singular_mode_interpretation}). \Cref{fig:airfoil_error} compares the singular values with the NRMSE for five-step-ahead prediction on $50$ held-out snapshots. The error drops rapidly as the first modes are added and then levels off after roughly $r = 10$, matching the decay of the singular values. Thus the singular spectrum gives a practical rank-selection rule: modes beyond this point carry little additional predictive energy.

\begin{figure}
	\centering
	\begin{subfigure}{0.8\textwidth}
		\centering
		\includegraphics[width=\linewidth]{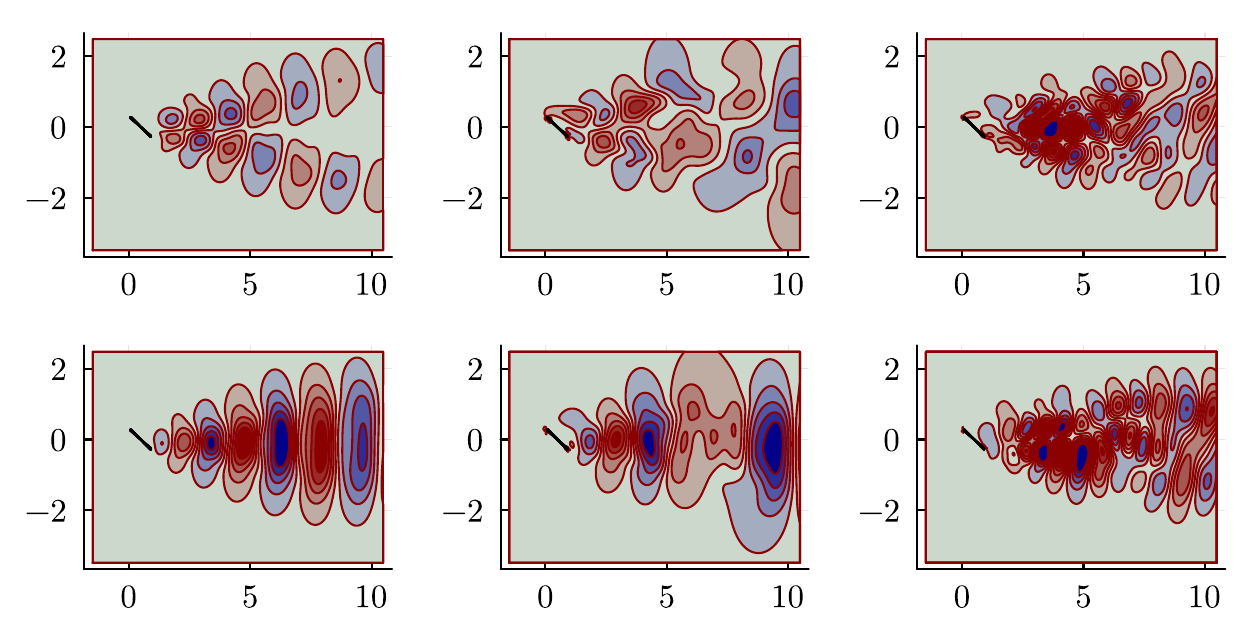}
	\end{subfigure}
	\hfill
	\begin{subfigure}{0.13\textwidth}
		\centering
		\includegraphics[width=\linewidth]{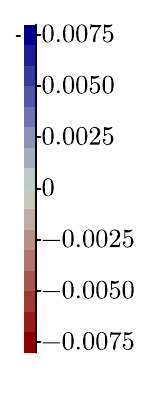}
	\end{subfigure}
	\caption{
  		Left singular modes for the low-Reynolds-number pitching-airfoil wake. Columns show modes $1$, $4$, and $16$. The top row gives the $x$-velocity component and the bottom row gives the $y$-velocity component. The airfoil is shown in black. The leading modes resolve large coherent wake structures, while the higher mode captures finer downstream oscillations.
	}\label{fig:airfoil}
\end{figure}

\begin{figure}
    \centering
    \begin{subfigure}{0.49\textwidth}
        \centering
        \includegraphics[width=\textwidth]{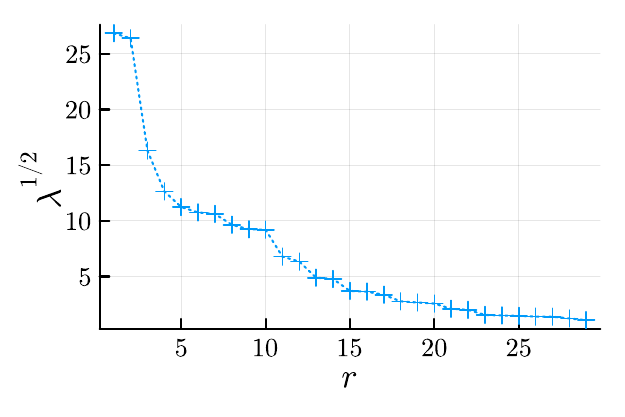}
    \end{subfigure}
    \hfill
    \begin{subfigure}{0.49\textwidth}
        \centering
        \includegraphics[width=\textwidth]{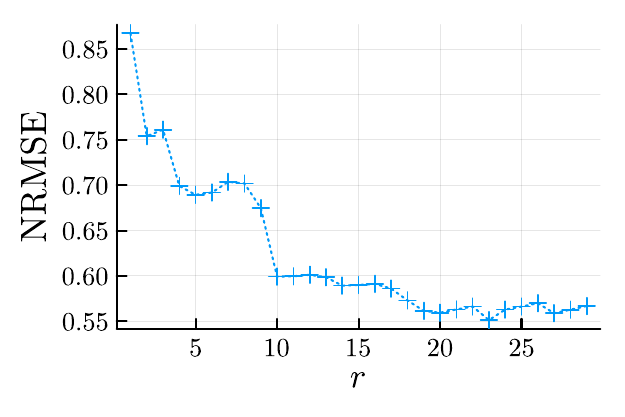}
    \end{subfigure}
    \caption{
        Rank dependence for the pitching-airfoil model. Left: singular values $\lambda^{1/2}$ of the Petrov--Galerkin regression, ordered by rank. Right: normalized root mean square error for five-step-ahead prediction on held-out snapshots. The prediction error drops rapidly as the leading modes are included and then saturates after roughly $r = 10$, consistent with the decay of the singular spectrum.
    }
    \label{fig:airfoil_error}
\end{figure}

\section{Conclusion}

We have framed a broad class of data-driven dynamics methods as Petrov--Galerkin regressions between observable spaces. The key change---allowing the two observable spaces to differ---puts DMD,
EDMD, SINDy, and Koopman regression into the same Petrov--Galerkin framework.
The regression
$
        \Psi(\Xv)K \approx \Phi(\Yv)
$
is a
finite section of the Koopman action between two chosen observable spaces; when
\(\Phi=id\), it is also a nonlinear predictor of the state obtained by
reevaluating the lift at each step.  In the large-data limit the fitted maps
converge to the corresponding projected operator, and the predictor converges
in \(L^2\) as the trial space is enriched.

The rectangular viewpoint also changes what should be plotted and interpreted.
Eigenvectors belong to square maps from a space to itself.  A Petrov--Galerkin
section between different observable spaces asks a different question: which
observable combinations are carried by the dynamics into the chosen trial
space, and with what strength?  Singular modes answer this question directly.
The right modes identify the inputs that are best resolved after evolution; the
left modes are their projected futures; the singular values measure the
coupling.  In this sense the singular value decomposition is not merely a
compression device.  It is the modal decomposition appropriate to finite,
asymmetric Koopman computation.

The examples show the range of this interpretation.  For the chaotic cubic map,
trajectory agreement ends at the Lyapunov horizon, but the learned map still
captures invariant statistics.  For the double gyre, singular modes recover
coherent transport structures.  For Lorenz--63, nonlinear lifting gives a compact
predictor that preserves the geometry and dominant statistics of the attractor.
For the pitching-airfoil wake, the singular spectrum supplies a practical rank
selection rule and the leading modes resolve coherent flow structures.  The
same least-squares object and singular modes therefore supports prediction, statistics, transport
diagnostics, and reduced modelling.

The message is simple.  Do not force the data-driven Koopman problem to be
square.  Choose the observables whose future matters, choose the space in which
that future can be represented, fit the Petrov--Galerkin map, and read its
singular structure.  This is \shortname{}: a rectangular, regression-based view
of nonlinear dynamics in which forecasting and interpretation are two sides of
the same matrix.

\section*{Acknowledgements}
MJC would like to thank Andrew Stuart for discussions about composing EDMD with nonlinear dictionary lifting. AH acknowledges that the project was supported by G-Research. 

\linespread{0.94}

\enlargethispage{20pt}


\vskip2pc

\bibliographystyle{plain}
\bibliography{sample}

\appendix

\section{Analytic Computations of Singular Modes for Cat Map}
\label{sec:arnold_sindy}

We consider the inverse Arnold cat map 
\begin{equation*}
	F^{-1}\begin{pmatrix} x \\ y \end{pmatrix} = \begin{cases}
		\begin{pmatrix} x - y \\ 2y - x + 1 \end{pmatrix}
			& \text{if } 0 \leq \dfrac{y}{x} < \dfrac{1}{2} \\[12pt]
		\begin{pmatrix} x - y \\ 2y - x \end{pmatrix}
			& \text{if } \dfrac{1}{2} \leq \dfrac{y}{x} < 1 \\[12pt]
		\begin{pmatrix} x + 1 - y \\ 2y - x - 1 \end{pmatrix}
			& \text{if } y \geq \dfrac{x}{2} + \dfrac{1}{2} \\[12pt]
		\begin{pmatrix} x - y + 1 \\ 2y - x \end{pmatrix}
			& \text{if } x \leq y < \dfrac{x}{2} + \dfrac{1}{2}
	\end{cases}
\end{equation*}
where $(x,y) \in [0,1]^2$ here. We transplant this map to $[-1/2,1/2]^2$ to apply \cref{alg:one_side}. Choosing $\Phi = id$, we desire an (ideally orthonormal) set of functions $\Psi$ such that $\Pi_\Psi F^{-1} = F^{-1}$. This is easily computed by performing Gram-Schmidt on $\{ [F^{-1}]_1, [F^{-1}]_2 \} \subset L^2 ([-1/2,1/2]_\text{per}^2)$. Now $\Pi_\Psi \cK \cP_\Phi$ can be represented by the matrix  
\begin{equation*}
	K_{i j} = A_{i j} = \left( \langle \psi_i, \cK \phi_j \rangle \right)_{\substack{1 \leq i \leq 2 \\ 1 \leq j \leq 2}} 
	= \begin{pmatrix}
		\tfrac{1}{2 \sqrt{3}} & \tfrac{1}{\sqrt{3}} \\ 
		0 & \sqrt{\tfrac{11}{12}}
	\end{pmatrix}  
\end{equation*}
with the singular value decomposition
{\scriptsize
\begin{equation*}
	K = \begin{pmatrix}
		\dfrac{-3+\sqrt{53}}{\sqrt{106-6\sqrt{53}}}
		&
		-\dfrac{3+\sqrt{53}}{\sqrt{106+6\sqrt{53}}}
		\\[18pt]
		\sqrt{\dfrac{1}{2}+\dfrac{3}{2\sqrt{53}}}
		&
		\sqrt{\dfrac{1}{2}-\dfrac{3}{2\sqrt{53}}}
	\end{pmatrix}
	\begin{pmatrix}
		\dfrac{1}{2}\sqrt{\dfrac{8+\sqrt{53}}{3}} & 0 \\[18pt]
		0 & \dfrac{1}{2}\sqrt{\dfrac{8-\sqrt{53}}{3}}
	\end{pmatrix}
	\begin{pmatrix}
		\dfrac{-7+\sqrt{53}}{\sqrt{2(53-7\sqrt{53})}}
		&
		-\dfrac{7+\sqrt{53}}{\sqrt{2(53+7\sqrt{53})}}
		\\[18pt]
		\sqrt{\dfrac{1}{2}+\dfrac{7}{2\sqrt{53}}}
		&
		\sqrt{\dfrac{2}{53+7\sqrt{53}}}
	\end{pmatrix}
\end{equation*}
}
The left singular modes $\varphi_1, \varphi_2$ are linear functions represented by the left singular vectors of $K$. Their direction is represented by arrows in \cref{fig:arnold_sindy}. The corresponding right singular modes are shown alongside.

\end{document}